\newtheorem{thm}{Theorem}[section]
\newtheorem{prop}[thm]{Proposition}
\newtheorem{lem}[thm]{Lemma}
\newtheorem{cor}[thm]{Corollary}
\theoremstyle{definition}
\newtheorem{deff}[thm]{Definition}
\newtheorem{defrmk}[thm]{Notations and Remark}
\newtheorem{rmk}[thm]{Remark}
\newtheorem*{mcs}{Modified Assertion of Severi}
\newcommand{\PP}{\ensuremath{\mathbb{P}}}
\newcommand{\h}{\ensuremath{\mathcal}}
\newcommand{\HL}{\ensuremath{\mathcal{H}^\mathcal{L}_}}
\newcommand{\dashdownarrow}{\mathrel{\rotatebox[origin=t]{-270}{\reflectbox{$\dashrightarrow$}}}}
\newcommand{\vni}{\vskip 4pt \noindent}
\title[Hilbert scheme of smooth curves with small index of speciality]
{On the Hilbert scheme of  linearly normal curves in $\mathbb{P}^r$ with small index of speciality}
\thanks{This work was started when the author was enjoying the hospitality and the stimulating atmosphere of the Max-Planck-Insitut f\"ur Mathematik (Bonn). The author was supported in part by National Research Foundation of South Korea (2019R1I1A1A01058457). The author would like to thank the referee for several useful comments as well as for pointing out some inaccuracies with  notations, which enhanced the overall readability of the paper significantly. }
\author[Changho Keem]{Changho Keem}
\address{
Department of Mathematics\\
Seoul National University\\
Seoul 08826,  
South Korea}
\email{ckeem1@gmail.com or ckeem@snu.ac.kr}
\subjclass{Primary 14C05, Secondary 14H10}
\keywords{Hilbert scheme, algebraic curves, linearly normal, special linear series}
\date{\today}
\begin{document}
\begin{abstract}
We study the Hilbert scheme $\mathcal{H}^\mathcal{L}_{d,g,r}$ parametrizing smooth, irreducible, non-degenerate and linearly normal curves of degree $d$ and genus $g$ in $\PP^r$ whose complete and very ample hyperplane linear series $\h{D}$ have  relatively small index of speciality $i(\h{D})=g-d+r$. In particular we show the existence (and non-existence as well in some sporadic cases) of every Hilbert scheme of linearly normal curves with $i(\h{D})=4$. We also determine the irreducibility of $\HL{2r+4,r+8,r}$ for $3\le r\le 8$, which are rather peculiar families in  a certain sense.\end{abstract}
\maketitle		
\section{ An overview, preliminaries and basic set-up}

Let $\h{H}_{d,g,r}$ be the Hilbert scheme of smooth, irreducible and non-degenerate curves of degree $d$ and genus $g$ in $\PP^r$. 
We denote by $\mathcal{H}^\mathcal{L}_{d,g,r}$ the union of those components of $\mathcal{H}_{d,g,r}$ whose general element is linearly normal.
By abuse of terminology, we say that a component of the Hilbert scheme $\h{H}_{d,g,r}$ has index of speciality $\alpha$, if 
the hyperplane series $\h{D}=g^r_d$ of a general element of the component has index of speciality $\alpha$, i.e. 
$$h^1(C,\h{D})=g-d+\dim |\h{D}| =\alpha \ge g-d+r.$$
\noindent
In case $\alpha \gneq g-d+r$, the linear  series $\h{D}$ is incomplete and it is possible that there may exist a component of the Hilbert scheme which has index of speciality strictly greater than $g-d+r$; cf. \cite{FS} and references therein, especially in the paragraph before the statement of Main Theorem in the Introduction, in which the authors also provide useful remarks on  connections between Hilbert scheme business and several other areas in algebraic geometry.  However  the index of speciality of any component of $\HL{d,g,r}$ is $g-d+r$, which one may define as the {\bf index of speciality of a Hilbert scheme of linearly normal curves}. 

\vni
\noindent
We recall the following modified assertion of Severi, which has been given attention by some authors recently; cf. \cite{JPAA, KK3}.
\begin{mcs}  A nonempty $\mathcal{H}^\mathcal{L}_{d,g,r}$ is irreducible for any triple $(d,g,r)$ in the Brill-Noether range  $$\rho (d,g,r)=g-(r+1)(g-d+r)\ge 0,$$ where $\mathcal{H}^\mathcal{L}_{d,g,r}$ is the union of those components of ${\mathcal{H}}_{d,g,r}$ whose general element is linearly normal.
\end{mcs}

\noindent
We note that the modified assertion of Severi makes sense only if the index of speciality of Hilbert scheme of linearly normal curves $\HL{d,g,r}$ is non-negative.  Through a preliminary attempt to settle down the Modified Assertion of Severi, which is still in its infancy, one has  quite extensive knowledge about the Hilbert scheme of linearly normal curves of small index of speciality $\alpha \le 3$ which can be summarized as follows; cf. \cite{lengthy, JPAA, KK3}.

\begin{itemize}
\item{$\alpha =0$}: $\mathcal{H}^\mathcal{L}_{g+r,g,r}\neq\emptyset$ and irreducible.
\item{ $\alpha=1$}: $\mathcal{H}^\mathcal{L}_{g+r-1,g,r}\neq\emptyset$ and irreducible for $g\ge r+1$ and is empty for $g\le r$.
\item{$\alpha=2$}: $\mathcal{H}^\mathcal{L}_{g+r-2,g,r}\neq\emptyset$ and irreducible for every $g\ge r+3$ and is empty if 
 $g\le r+2$.

\item{$\alpha=3$}: $\h{H}^\h{L}_{g+r-3,g,r}=\emptyset$ for $g\le r+4$.
For $r\ge 5$ and $g\ge r+5$, $\h{H}^\h{L}_{g+r-3,g,r}\neq\emptyset$ unless $g=r+6$ and $r\ge 10$. Moreover $\h{H}^\h{L}_{g+r-3,g,r}$ is irreducible for every $g\ge 2r+3$ and is reducible for almost all $g$ in the range $r+5\le g\le 2r+2$.

\end{itemize}

\noindent
\vni
One obvious advantage in considering the Hilbert scheme of linearly normal curves according to its  index of speciality {\bf $\alpha$} is that  the residual series of the very ample hyperplane series corresponding to a general element of any component  of the Hilbert scheme has fixed dimension $\alpha -1$ regardless of values of  genus $g$ and  
degree $d$. Therefore one may work  more effectively in exploring out several properties of  Hilbert schemes under consideration by looking at the family of curves in a fixed projective space $\PP^{\alpha -1}$ induced by the residual series of  hyperplane series. Especially when $\alpha$ is low, approaching this way becomes somewhat useful.

\vni
\noindent
In case $\alpha =3$, 
the residual series of hyperplane series are nets and it is possible to derive the basic properties such as irreducibility,  existence as well as the number of moduli of the Hilbert scheme of linearly normal curves by considering the corresponding property of the Severi variety of plane curves,  which has been proven to be effective to a certain extent in some previous works on the subject; \cite{lengthy, JPAA, KKy1, KK3}. In this paper we consider Hilbert scheme of linear normal curves with index of speciality $\alpha =4$. Equivalently we study the Hilbert schemes of non-degenerate, smooth and linearly normal curves in $\PP^r$ ($r\ge 3$)  of degree $d=g+r-4$. We show the existence as well as non-existence of $\HL{d,g,r}$ for every triple
$(d,g,r)$ with $g-d+r=4$. We also determine the irreducibility of $\HL{d,g,r}$ for specific triples $(d,g,r)=(2r+4,r+8,r)$ with $3\le r\le 8$ which is of some particular interest. We also study $\HL{d,g,r}$ for the triples $(d,g,r)=(2r+5,r+9,r)$ ($3\le r\le 11$) and give a partial irreducibility result. 

\vni
\noindent
For notation and conventions, we  usually follow those in \cite{ACGH, ACGH2, H1}; e.g. $\pi (d,r)$ is the maximal possible arithmetic genus of an irreducible and non-degenerate curve of degree $d$ in $\PP^r$. $\pi_1(d,r)$ is the so-called the second Castelnuovo genus bound of  an irreducible and non-degenerate curve of degree $d$ in $\PP^r$  not lying on a surface of degree $r-1$; cf. \cite[Theorem 3.13, 3.15]{H1}. In particular if $r=3$, $\pi_1(d,3)$ is the maximal possible arithmetic genus of an irreducible and non-degenerate curve of degree $d$ in $\PP^3$ not lying on a quadric surface. 
Following classical terminology, a linear series of degree $d$ and dimension $r$ on a smooth irreducible curve $C$ is usually denoted by $g^r_d$.
A base-point-free linear series $g^r_d$ ($r\ge 2$) on a smooth curve $C$ is called birationally very ample when the morphism 
$C \rightarrow \mathbb{P}^r$ induced by  $g^r_d$ is generically one-to-one (or birational) onto its image.  A base-point-free linear series $g^r_d$ on $C$  is said to be compounded of a covering (compounded for short) if the morphism induced by $g^r_d$ gives rise to a non-trivial covering map $C\rightarrow C'$ of degree $k\ge 2$. For a complete linear series $\h{E}$ on a smooth curve $C$, the residual series $|K_C-\h{E}|$ is denoted by $\h{E}^\vee$.
We work over the field of complex numbers.

\vni
\noindent
The organization of this paper is as follows. We briefly recall some terminologies and basic preliminaries  in the remainder of this section. In the second section we determine all the triples $(d,g,r)$,  $r\ge 3$ for which $\mathcal{H}^\mathcal{L}_{g+r-4,g,r}\neq\emptyset$.

\vni
\noindent
In subsequent sections, we determine the irreducibility of $\mathcal{H}^\mathcal{L}_{g+r-4,g,r}$ for $g=r+8$ by using the  irreducibility of Severi varieties of nodal curves on a non-singular quadric surface in $\PP^3$. In the final section -- with an open end -- we make a discussion on a couple of related aspects of our study, such as the next case $d=g+r-5$ for further investigation, in addition to the existence and partial irreducibility result of  $\mathcal{H}^\mathcal{L}_{g+r-4,g,r}$ for $g=r+9$  ($3\le r\le 11$). 

\vni
\noindent
We briefly recall several fundamental facts for our study of Hilbert schemes  which are  well-known; cf. \cite{ACGH2}  or \cite[\S 1 and \S 2]{AC2}.
Let $\mathcal{M}_g$ be the moduli space of smooth curves of genus $g$. Given an isomorphism class $[C] \in \mathcal{M}_g$ corresponding to a smooth irreducible curve $C$, there exist a neighborhood $U\subset \mathcal{M}_g$ of the class $[C]$ and a smooth connected variety $\mathcal{M}$ which is a finite ramified covering $h:\mathcal{M} \to U$, as well as  varieties $\mathcal{C}$, $\mathcal{W}^r_d$ and $\mathcal{G}^r_d$ proper over $\mathcal{M}$ with the following properties:
\begin{enumerate}
\item[(1)] $\xi:\mathcal{C}\to\mathcal{M}$ is a universal curve, i.e. for every $p\in \mathcal{M}$, $\xi^{-1}(p)$ is a smooth curve of genus $g$ whose isomorphism class is $h(p)$,
\item[(2)] $\mathcal{W}^r_d$ parametrizes the pairs $(p,L)$ where $L$ is a line bundle of degree $d$ and $h^0(L) \ge r+1$ on $\xi^{-1}(p)$,
\item[(3)] $\mathcal{G}^r_d$ parametrizes the couples $(p, \mathcal{D})$, where $\mathcal{D}$ is possibly an incomplete linear series of degree $d$ and dimension $r$ on $\xi^{-1}(p)$.
\end{enumerate}

\vni
\noindent
Given an irreducible family $\h{F}\subset\h{G}^r_d$ with some geometric meaning, if a general element of $\h{F}$ is complete,  the closure of the family $\{\h{E}^\vee| \h{E}\in \h{F}, ~\h{E} \textrm{ is complete}\}\subset \h{G}^{g-d+r-1}_{2g-2-d}$ is sometimes denoted by $\h{F}^\vee$.

\vni
\noindent
Let $\widetilde{\mathcal{G}}$ ($\widetilde{\mathcal{G}}_\mathcal{L}$,  respectively) be  the union of components of $\mathcal{G}^{r}_{d}$ whose general element $(p,\mathcal{D})$ corresponds to a very ample (very ample and complete, respectively) linear series $\mathcal{D}$ on the curve $C=\xi^{-1}(p)$. By recalling that an open subset of $\mathcal{H}_{d,g,r}$ consisting of elements corresponding to smooth irreducible and non-degenerate curves is a $\mathbb{P}\textrm{GL}(r+1)$-bundle over an open subset of $\widetilde{\mathcal{G}}$, the irreducibility of $\widetilde{\mathcal{G}}$ guarantees the irreducibility of $\mathcal{H}_{d,g,r}$. Likewise, the irreducibility of $\widetilde{\mathcal{G}}_\mathcal{L}$ ensures the irreducibility of 
 $\mathcal{H}_{d,g,r}^\mathcal{L}$.
We recall the following regarding the schemes $\mathcal{G}^{r}_{d}$ which is also well-known; cf. \cite[2.a]{H1} and \cite[Ch. 21, \S 3, 5, 6, 11, 12]{ACGH2}. 
\begin{prop}\label{facts}
For non-negative integers $d$, $g$ and $r\ge 1$, let $$\rho(d,g,r):=g-(r+1)(g-d+r)$$ be the Brill-Noether number.
The dimension of any component of $\mathcal{G}^{r}_{d}$ is at least $$\lambda(d,g,r):=3g-3+\rho(d,g,r), $$ 
hence the minimal possible dimension of any component of $\h{H}_{d,g,r}$ is
$$\h{X}(d,g,r):=\lambda (d,g,r)+\dim\PP GL(r+1).$$ Moreover, if $\rho(d,g,r)\ge 0$, there exists a unique component (called the principal component) $\mathcal{G}_0$ of $\widetilde{\mathcal{G}}$ which dominates $\mathcal{M}$(or $\mathcal{M}_g$).
	\end{prop}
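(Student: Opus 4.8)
The plan is to read this statement as a synthesis of the foundational results of Brill--Noether theory, organized along the projection $\mathcal{G}^r_d \to \mathcal{M}$ whose base has dimension $3g-3$. First I would establish the dimension bound one fiber at a time. On a fixed curve $C=\xi^{-1}(p)$, the Brill--Noether locus $W^r_d(C)\subset \mathrm{Pic}^d(C)$ is realized as a degeneracy locus: fixing an effective divisor $E$ of large degree $m$, the evaluation maps $H^0(L(E))\to H^0(L(E)|_E)$ globalize to a morphism of vector bundles $\gamma\colon \mathcal{E}\to\mathcal{F}$ over $\mathrm{Pic}^d(C)$ of ranks $d+m+1-g$ and $m$, and $W^r_d(C)$ is exactly the locus where $\mathrm{rank}\,\gamma$ drops to $d+m-g-r$. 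The expected codimension of this locus is $(r+1)(g-d+r)$, so, since $\dim\mathrm{Pic}^d(C)=g$ and every component of a determinantal locus has at most the expected codimension, each nonempty component of $W^r_d(C)$ has dimension at least $g-(r+1)(g-d+r)=\rho$. The companion variety $\mathcal{G}^r_d(C)$ carries a parallel degeneracy-locus description (coinciding with $W^r_d$ over the dense open $W^r_d\setminus W^{r+1}_d$), giving the same lower bound $\rho$ for each of its components; fibering over $\mathcal{M}$ then yields $\dim\ge 3g-3+\rho=\lambda(d,g,r)$ for every component of $\mathcal{G}^r_d$.

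The bound for the Hilbert scheme is then immediate from the structure recalled just before the statement: over an open subset of $\widetilde{\mathcal{G}}\subseteq\mathcal{G}^r_d$ the locus of smooth, irreducible, non-degenerate curves is a $\PP \mathrm{GL}(r+1)$-bundle, so adding the fiber dimension $\dim\PP \mathrm{GL}(r+1)$ to $\lambda(d,g,r)$ produces the asserted minimal value $\mathcal{X}(d,g,r)$.

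For the principal component when $\rho\ge 0$ I would invoke the \emph{existence} half of the theory. The Kempf and Kleiman--Laksov existence theorem guarantees $W^r_d(C)\neq\emptyset$ for every curve $C$, while for a general $C$ the Gieseker--Petri theorem makes $\mathcal{G}^r_d(C)$ smooth of pure dimension $\rho$ away from $W^{r+1}_d$. To extract a single dominating component I would show that the family of these $\rho$-dimensional series is irreducible over the general point of $\mathcal{M}$: when $\rho>0$ this follows from the Fulton--Lazarsfeld connectedness theorem together with smoothness, and when $\rho=0$, where the fiber is a finite reduced set of cardinality the Castelnuovo number, it follows from the transitivity of the Brill--Noether monodromy. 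Taking the closure in $\mathcal{G}^r_d$ of this irreducible family gives a component $\mathcal{G}_0$ dominating $\mathcal{M}$, and the irreducibility of the general fiber forces it to be the unique such component; one finally checks that in the relevant numerical range a general member of $\mathcal{G}_0$ is very ample, so that $\mathcal{G}_0$ indeed lies in $\widetilde{\mathcal{G}}$.

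The main obstacle is the existence half. In contrast to the dimension bound, which is the soft determinantal inequality, the non-emptiness of $W^r_d(C)$ for \emph{every} curve $C$ when $\rho\ge 0$ is the genuinely hard input, resting on a positivity argument showing that the governing Porteous class on $\mathrm{Pic}^d(C)$ is nonzero. The uniqueness of the dominating component in the borderline case $\rho=0$ is the second delicate point, since there the fibers are merely finite and the irreducibility of the total family must be supplied by monodromy rather than by connectedness of a positive-dimensional fiber.
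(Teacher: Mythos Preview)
Your outline is correct and is precisely the standard argument found in the references the paper cites. The paper itself gives no proof of this proposition: it is stated as a recollection of well-known facts, introduced by ``We recall the following regarding the schemes $\mathcal{G}^{r}_{d}$ which is also well-known; cf.\ [H1, 2.a] and [ACGH2, Ch.~21, \S 3, 5, 6, 11, 12]'', with those citations serving in lieu of a proof. So there is nothing to compare against beyond checking that your sketch matches the content of those references, which it does---the determinantal lower bound for $W^r_d$ and $G^r_d$, Kempf/Kleiman--Laksov existence, Gieseker--Petri smoothness, Fulton--Lazarsfeld connectedness for $\rho>0$, and the Eisenbud--Harris monodromy argument for $\rho=0$ are exactly the ingredients assembled in those sources.

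One small caveat on your final clause: the assertion that a general member of $\mathcal{G}_0$ is very ample (so that $\mathcal{G}_0\subset\widetilde{\mathcal{G}}$) is not literally true for every $r\ge 1$ with $\rho\ge 0$---it already fails for $r=1$, and more generally one needs a mild numerical hypothesis (e.g.\ $r\ge 3$ and $d\le g+r$, the range actually used later in the paper, where the claim is attributed to Harris). The paper silently works in that range and invokes the very-ampleness separately when needed, so in context there is no issue, but if you were writing out a self-contained proof you would want to flag the hypothesis under which this last step goes through rather than fold it into the proposition as stated.
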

\section{Existence and non-existence of $\HL{d,g,r}$}
In this section we prove the existence of $\HL{g+r-4,g,r}$ for every $g\ge r+7$ ($r\ge 5$) unless  $g=r+8$ ($r\ge 9$) or $g=r+9$  ( $r\ge 12)$, in which cases we have $\HL{g+r-4,g,r}=\emptyset$.  
For  $r=3,~ 4$, we quote the existence of 
$\HL{g+r-4,g,r}$ which are known to some people and to the author  (or as folklores)  as follows. 

\begin{rmk}\label{r=34}
(i) For $r=3,$ it is easy to see that  $$\HL{g+r-4,g,3}=\HL{g-1,g,3}\neq\emptyset \textrm{ if and only if }g\ge r+6=9.$$ Note that  by Castelnuovo genus bound, $\h{H}^\h{L}_{g-1,g,3}=\emptyset$ for $g\le 8$.  For $g=9$, (extremal) curves  of type $(4,4)$ on  quadrics form the only irreducible component of degree $d=8$ and genus $g=9$ of dimension 
$$\dim\PP H^0(\PP^1\times\PP^1,\h{O}_{\PP^1\times\PP^1}(4,4))+\dim\PP H^0(\PP^3,\h{O}_{\PP^3}(2))=33\gneq 4\cdot 8=\h{X}(8,g,3).$$
For $g=10$, there exist curves of type $(3,6)$ on  smooth quadrics which form an irreducible family of dimension 
$$\dim\PP H^0(\PP^1\times\PP^1, \h{O}_{\PP^1\times\PP^1}(3,6))+\dim\PP H^0(\PP^3,
\h{O}_{\PP^3}(2))=36.$$ Moreover, complete intersections of two irreducible cubics form another family of the same expected dimension 
$$\dim\mathbb{G}(1, \PP(H^0(\PP^3,\h{O}_{\PP^3}(3)))=\dim\mathbb{G}(1,19)=36.$$ Hence $\h{H}_{9,10,3}=\h{H}^\h{L}_{9,10,3}\neq\emptyset$ and is reducible.  We further note that the inequality $$g\le\frac{(g-2)(g-3)}{6}=\frac{(d-1)(d-2)}{6}\le\pi_1(d,3)$$ is valid for every pair $(d,g)=(g-1,g)$ with $g\ge 11$,  which guarantees the existence of a smooth curve of degree $g-1$ and genus $g$ in $\PP^3$; cf. \cite[Theorem 2.4]{Gruson}. 

\vni
\noindent
(ii) For the existence of smooth curves of genus $g$ and degree $d=g+r-4=g$ in $\PP^4$, one may deduce that 
$$\h{H}^\h{L}_{g+r-4,g,r}=\HL{g,g,4}\neq\emptyset \textrm{ if and only if } g\ge 11=r+7 $$   as follows. In general, in the range $d\le g+r$ inside the Brill-Noether range $\rho (d,g,r)\ge 0$, the principal component $\mathcal{G}_0$ which has the expected dimension $\lambda (d,g,r)$ is one of the components of $\widetilde{\mathcal{G}}_\mathcal{L}$ (cf. \cite[2.1 page 70]{H1}),   and  hence $\widetilde{\mathcal{G}}_\mathcal{L}$ or 
 $\mathcal{H}_{d,g,r}^\mathcal{L}$ is non-empty. In particular for $g\ge 20$, $\HL{g,g,4}\neq\emptyset$.
 For $(d,g,4)=(g,g,4)$ outside the Brill-Noether range $11\le g\le 19$, we refer \cite[Theorem 1.0.2]{rathman}. We also remark that the proof of our existence theorem (Theorem \ref{main}) works for several cases $(d,g,r)=(g,g,4)$ with negative Brill-Noether number.  For $g=16$ or $g=19$, there exist smooth curves of degree $d=g$  on Bordiga surface in $\PP^4$. The Castelnuovo genus bound implies the the non-existence of $\HL{g,g,4}$ for $g\le 10$; cf. Table 1 at the end of this section.
\end{rmk}

\vni
\noindent
The following lemma - regarding multiples of the unique pencil $g^1_k$ on a general $k$-gonal curve - is useful  to show the existence of a very ample linear series having index of speciality $\alpha \le 4$. 
\begin{lem}\cite[Proposition 1.1]{CKM}\label{kveryample} Assume $2k-g-2<0$. Let $C$ be a general $k$-gonal curve of genus $g$, $k\ge 2$, $0\le m$, $n\in\mathbb{Z}$ such that 
\begin{equation}\label{veryamplek}
g\ge 2m+n(k-1)
\end{equation}
 and let $D\in C_m$ be an effective divisor of degree $m$ on $C$. Assume that there is no $F\in g^1_k$ with $F\le D.$ Then $\dim|ng^1_k+D|=n$.
\end{lem}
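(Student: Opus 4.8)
The plan is to prove the two inequalities $\dim|ng^1_k+D|\ge n$ and $\dim|ng^1_k+D|\le n$ separately, the first being elementary and the second carrying all the content. Write $A:=g^1_k$. Because $2k-g-2<0$, on a general $k$-gonal curve $A$ is base-point-free and is the \emph{unique} $g^1_k$, and it realizes $C$ as a degree-$k$ cover $\pi\colon C\to\PP^1$ with $nA\cong\pi^*\h{O}_{\PP^1}(n)$. For the lower bound, pulling back the complete system $|\h{O}_{\PP^1}(n)|$ along $\pi$ produces a linear subseries of $|nA|$ of dimension $n$; since $D$ is effective this gives $\dim|nA+D|\ge\dim|nA|\ge n$. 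Hence everything reduces to the upper bound $h^0(nA+D)\le n+1$.

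I would obtain the upper bound by induction on $n$. For the base case $n=0$ one must check $h^0(D)=1$: if $|D|$ moved, its base-point-free part would be a $g^1_{m'}$ with $m'\le m$, which on a general $k$-gonal curve (in this gonality range) must be a multiple of $A$, whence $D$ itself would contain a fiber of $\pi$, contrary to the hypothesis that no $E\in g^1_k$ satisfies $E\le D$. For the inductive step, fix a general fiber $F\in|A|$, reduced and disjoint from $D$, and use
\begin{equation*}
0\to\h{O}_C\big((n-1)A+D\big)\to\h{O}_C(nA+D)\xrightarrow{\ \mathrm{ev}_F\ }\h{O}_F\to 0 ,
\end{equation*}
which gives $h^0(nA+D)=h^0\big((n-1)A+D\big)+\operatorname{rank}(\mathrm{ev}_F)$, where $\mathrm{ev}_F\colon H^0(nA+D)\to H^0(\h{O}_F)\cong\CC^k$ is evaluation along $F$. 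The inductive hypothesis, whose numerical requirement $g\ge 2m+(n-1)(k-1)$ is implied by \eqref{veryamplek}, gives $h^0\big((n-1)A+D\big)=n$, so it suffices to prove $\operatorname{rank}(\mathrm{ev}_F)=1$. The sections coming from $\pi^*|\h{O}_{\PP^1}(n)|+D$ are all constant along $F$ and nonzero there, so the rank is at least $1$; the whole point is that it is not larger.

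The main obstacle is exactly this last step: ruling out $\operatorname{rank}(\mathrm{ev}_F)\ge 2$. An extra dimension in the image would produce a section of $nA+D$ vanishing along a proper nonempty subset of the general fiber $F$; subtracting such a partial fiber and comparing with $|(n-1)A+D|$ manufactures a linear series whose existence is incompatible, on a general $k$-gonal curve, with the uniqueness and rigidity of $A=g^1_k$ — equivalently it would force a full fiber of $\pi$ into $D$, against the standing hypothesis. This is where the genericity of $C$ and the full strength of $g\ge 2m+n(k-1)$ are consumed, and one makes it rigorous either via the Coppens--Martens classification of special linear series on a general $k$-gonal curve or by specializing $C$ to an explicit $k$-gonal model and invoking semicontinuity of $h^0$. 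The cleanest way to package the obstacle is through the rank-$k$ bundle $E:=\pi_*\h{O}_C(D)$ on $\PP^1$, for which $h^0(E(n))=h^0(nA+D)$: the base case $h^0(E)=h^0(D)=1$ forces $E\cong\h{O}_{\PP^1}\oplus\bigoplus_{i=2}^{k}\h{O}_{\PP^1}(a_i)$ with $a_i\le -1$, and the assertion becomes the sharper balancedness statement $a_i\le -(n+1)$ for $i\ge 2$. It is precisely the numerology $g\ge 2m+n(k-1)$ that guarantees this bound on the scrollar-type invariants of $E$, and hence $h^0(nA+D)=n+1$, completing the induction.
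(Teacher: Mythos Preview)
The paper does not prove this lemma; it is quoted verbatim from \cite{CKM} without argument, so there is no in-paper proof to compare against.

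As for your proposal itself: the architecture is sound---the lower bound is immediate, and recasting the upper bound as a splitting-type statement for $E=\pi_*\h{O}_C(D)$ is exactly the right reformulation---but you have not actually proved the crux. You explicitly say the rank-$1$ evaluation claim (equivalently $a_i\le -(n+1)$ for $i\ge 2$) is ``where the genericity of $C$ and the full strength of $g\ge 2m+n(k-1)$ are consumed,'' and then offer two ways out: invoke the Coppens--Martens classification, or degenerate and use semicontinuity. The first is circular, since this lemma \emph{is} Proposition~1.1 of \cite{CKM} and underlies that classification. The second is the correct route (and is essentially what \cite{CKM} do), but you have not carried it out; your closing sentence that ``the numerology $g\ge 2m+n(k-1)$ guarantees this bound'' is misleading, because the numerology only bounds $\sum_{i\ge 2}a_i$, not the individual $a_i$---the balancedness genuinely requires the genericity of $C$ in the $k$-gonal locus. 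The same gap infects your base case: the assertion that any moving $|D|$ with $\deg D\le m\le g/2$ on a general $k$-gonal curve must contain a full fibre of $\pi$ is itself a nontrivial instance of the same theory, not something available for free. So what you have written is a correct outline of a proof, with the one essential step---the genericity/degeneration argument---left as an exercise.
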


\vni
\noindent
The  following easy lemma -- which is adapted to our specific situation -- will be used to show the 
non-existence of $\HL{g+r-4,g,r}$ for $g=r+8$ ($r\ge 9$) and $g=r+9$ ($ r\ge12$). 

\begin{lem}\label{easylemma} Let $\h{E}=g^3_e$ (with $e=10, 11$) be a complete and special linear series (possibly with non-empty base locus $\Delta$)  on a smooth curve $C$ of genus $g$. Suppose $\h{E}$ is compounded. Then $\h{E}^\vee$ -- the residual series of $\h{E}$ -- is not very ample. 
\end{lem}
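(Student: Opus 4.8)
The plan is to use the very-ampleness criterion in its negative form: $\h{E}^\vee=|K_C-\h{E}|$ fails to be very ample as soon as there is an effective divisor $p+q$ of degree $2$ that does not impose two independent conditions on $\h{E}^\vee$. Applying Riemann--Roch twice and recalling $h^0(\h{E})=4$, one gets for every $p+q$
\[
h^0(K_C-\h{E})-h^0(K_C-\h{E}-p-q)=6-h^0(\h{E}+p+q),
\]
so the left-hand side drops below $2$ -- i.e. $\h{E}^\vee$ is not very ample -- exactly when $h^0(\h{E}+p+q)\ge 5=h^0(\h{E})+1$. The whole problem thus reduces to exhibiting one such pair $p+q$, and we do not even need $p\neq q$: a coincidence $p=q$ merely records failure to separate a tangent vector, which already obstructs very ampleness.

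Next I would unwind the compoundedness of $\h{E}$. Writing $\h{E}=M+\Delta$ with $M$ the base-point-free moving part and $\Delta$ the base locus, the morphism attached to $M$ factors as a covering $f\colon C\to C'$ of degree $k\ge 2$ followed by a nondegenerate map of $C'$ into $\PP^3$, so $M=f^*\h{L}'$ for a birationally very ample $g^3_{d'}$ on the normalization of $C'$, with $d'=\deg C'\ge 3$ and $kd'=e-\deg\Delta$. Since $h^0(M)=h^0(\h{E})=4$ and pullback is injective, the count $4=h^0(f^*\h{L}')\ge h^0(\h{L}')\ge 4$ forces $h^0(\h{L}')=4$, i.e. $\h{L}'$ is a complete $g^3_{d'}$. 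As $d'\ge 3$ and $kd'\le e\le 11$ with $k\ge 2$, the only possibilities are $k=2$ with $d'\in\{3,4,5\}$ and $k=3$ with $d'=3$. In each case the Castelnuovo bound gives $g(C')\le\pi(d',3)=d'-3$, whence
\[
h^1(\h{L}')=h^0(\h{L}')-d'+g(C')-1=g(C')-(d'-3)\le 0,
\]
so $\h{L}'$ is nonspecial and therefore $h^0(\h{L}'+y)=h^0(\h{L}')+1=5$ for every $y\in C'$.

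The heart of the argument is then to produce a degree-$2$ divisor $p+q$ with $\Delta+p+q\ge F$ for some fibre $F=f^*y_0$. For $k=2$ I would take a general fibre $F=p+q$ (two distinct points), so that $F\le\Delta+p+q$ holds trivially. For $k=3$ we have $\deg\Delta=e-9\in\{1,2\}>0$; choosing $\delta_0\in\Delta$, putting $y_0=f(\delta_0)$ and $p+q:=F-\delta_0$ (effective of degree $2$, since $\delta_0\le F$), we again get $F=\delta_0+(p+q)\le\Delta+p+q$. In both cases
\[
\h{E}+p+q=f^*\h{L}'+\Delta+p+q\ \ge\ f^*\h{L}'+F=f^*(\h{L}'+y_0),
\]
so, using injectivity of pullback and nonspeciality of $\h{L}'$,
\[
h^0(\h{E}+p+q)\ \ge\ h^0\big(f^*(\h{L}'+y_0)\big)\ \ge\ h^0(\h{L}'+y_0)=5.
\]
By the reduction of the first paragraph, $\h{E}^\vee$ is not very ample.

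As for where the difficulty sits, the two substantive inputs are (i) the identification $M=f^*\h{L}'$ together with the dimension count pinning down $h^0(\h{L}')=4$, and (ii) the Castelnuovo inequality $g(C')\le d'-3$ that forces $\h{L}'$ to be nonspecial; both rely on the bound $e\le 11$, which is precisely what confines $(k,d')$ to the short list above and excludes $k\ge 4$. The remaining step of arranging $\Delta+p+q\ge F$ is elementary once one notices that for $k=3$ the base locus is nonempty and can be used to ``complete'' a fibre using only two further points. No genericity is actually required in the $k=3$ case: even when $\delta_0$ is a ramification point of $f$ the divisor $p+q=F-\delta_0$ remains effective of degree $2$, and a possible coincidence $p=q$ only upgrades the conclusion to a tangent-vector obstruction, so no separate treatment is needed.
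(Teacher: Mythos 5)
Your proof is correct and rests on the same mechanism as the paper's: exhibit a degree-two divisor $p+q$ that completes a fibre of the covering $f\colon C\to C'$ (using a base point of $\h{E}$ to supply the missing point of the fibre when $k=3$), deduce $h^0(\h{E}+p+q)\ge h^0(\h{E})+1$, and conclude that $p+q$ fails to impose independent conditions on $\h{E}^\vee$. The difference is one of packaging. The paper runs an explicit case analysis --- trigonal with $\h{E}=|3g^1_3+\Delta|$, bielliptic with $\h{E}=|\phi^*(g^3_4)+\Delta|$, and a double cover of a genus-$2$ curve with $\h{E}=|\phi^*(g^3_5)+\Delta|$ --- and computes $\dim|\h{E}+t+s|$ directly in each case, whereas you derive the jump uniformly from the nonspeciality of the complete $g^3_{d'}$ on (the normalization of) the image curve, which you extract from Castelnuovo's bound $g(C')\le\pi(d',3)=d'-3$ together with the count $h^0(\h{L}')=4$. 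Your uniform argument buys two small things: it needs only the Castelnuovo bound rather than the precise genus of $C'$, and it automatically covers the subcase $(k,d')=(2,3)$, i.e. $C$ hyperelliptic with $\h{E}=|3g^1_2+\Delta|$, which is listed among the possibilities in the proof of Theorem \ref{main}(c) but is not among the three cases written out in the paper's proof of the lemma. The one point that genuinely needs checking --- and you do check it --- is that for $k=3$ the base locus is nonempty, since $\deg\Delta=e-9\ge 1$, so that a trigonal fibre really can be completed by adding only two points; this is exactly the role $\Delta\neq\emptyset$ plays in the paper's case (a).
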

\begin{proof} Let $k$ be the degree of the morphism $C\stackrel{\phi}{\rightarrow} E\subset\PP^3$ induced by 
the base-point-free part of $\h{E}$. Since $e\ge k\cdot\deg\phi (C)$ and $\deg\phi (C)\ge3$, the following three cases are possible; 
\begin{enumerate}
\item[(a)] $k=3$ and  $C$ is 
 trigonal with $\h{E}=|3g^1_3+\Delta|$, $\Delta\neq\emptyset$.
 \item[(b)] $k=2$ and $C$ is bielliptic with $\h{E}=|\phi^*(g^3_4)+\Delta|$, where $C\stackrel{\phi}{\rightarrow} E$ is a double cover of an elliptic curve $E$. 
 \item[(c)] $k=2$ and $C \stackrel{\phi}{\rightarrow} E$ is a double cover of a curve $E$ of genus $h=2$, $\h{E}=|\phi^*(g^3_5)+\Delta|$. 
 \end{enumerate}
(a) If $C$ is trigonal, let $q\in\Delta$ and  let  $q+t+s\in g^1_3$ be the unique trigonal divisor containing  $q$. We then have 
\begin{eqnarray*}
\dim|\h{E}+t+s|&=&\dim|3g^1_3+\Delta+t+s|=\dim|3g^1_3+(q+t+s)+(\Delta-q)|\\&=&\dim|4g^1_3+(\Delta -q)|\ge\dim\h{E}+1.
\end{eqnarray*}
 \noindent
 (b) If
 $k=2$ and  $C$ is bielliptic, then for any $u\in E$, 
 $$\dim|\h{E}+\phi^*(u)|=\dim|\phi^*(g^3_4+u)+\Delta|=\dim|\phi^*(g^4_5)+\Delta|\ge \dim\h{E}+1$$
 
 \noindent
 (c) For any $u\in E$, we also have $$\dim|\h{E}+\phi^*(u)|=\dim|\phi^*(g^3_5+u)+\Delta|=\dim|\phi^*(g^4_6)+\Delta|\ge \dim\h{E}+1.$$ 
 \noindent
 By Riemann-Roch,  it follows that $$\dim|\h{E}^\vee -(r_1+r_2)|\ge \dim\h{E}^\vee-1$$ for some $r_1+r_2\in C_2$ and hence $\h{E}^\vee$ is not very ample.
 \end{proof}

\noindent
We assume $r\ge 5$ in the next theorem, just because the existence of   $\HL{g+r-4,g,r}$ for $r=3, 4$ is completely known as we discussed  in Remark \ref{r=34}.
\begin{thm} \label{main}
\begin{enumerate}
\item[(a)] $\HL{g+r-4,g,r}=\emptyset$ for $g\le r+6$, $r\ge 5$. 
\item[(b)] $\HL{g+r-4,g,r}\neq\emptyset$
for  $g=r+7$ or for any $g\ge r+10$, $r\ge 5$.
\item[(c)] $\HL{g+r-4,g,r}\neq\emptyset$ if and only if $~5\le r\le 8$ for $g=r+8$, $r\ge 5$ .
\item[(d)] $\HL{g+r-4,g,r}\neq\emptyset$ if and only if $~5\le r\le 11$ for $g=r+9$, $r\ge 5$. 
\end{enumerate}
\end{thm}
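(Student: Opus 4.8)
The plan is to recast the problem in terms of the residual series and then read off existence versus non-existence from a single dichotomy. A component of $\HL{g+r-4,g,r}$ is governed by a complete very ample $\h{D}=g^r_{g+r-4}$ with $h^1(\h{D})=4$, so its residual $\h{D}^\vee=g^3_m$ has $m:=2g-2-d=g-r+2$ and is again complete; conversely, via the parameter spaces $\h{G}^3_m$, $\widetilde{\mathcal{G}}_{\mathcal{L}}$ of Proposition \ref{facts} and the duality $\h{F}\mapsto\h{F}^\vee$, the scheme $\HL{g+r-4,g,r}$ is nonempty if and only if some smooth curve of genus $g$ carries a complete $g^3_m$ whose residual is very ample. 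Everything therefore turns on whether such a $g^3_m$ can be chosen \emph{birationally very ample} (so that $C$ maps birationally onto a non-degenerate degree-$m$ space curve) or \emph{compounded}, and I will organize the proof around this split.

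\textbf{Non-existence.} Clifford's theorem applied to $\h{D}$ gives $r\le d/2$, i.e. $g\ge r+4$, so $\HL{g+r-4,g,r}=\emptyset$ for $g\le r+3$. For $g=r+4,r+5,r+6$ the residual is a $g^3_m$ with $m=6,7,8$; since $g\ge r+4\ge 9$ exceeds $\pi(7,3)=6$ and $\pi(8,3)=9$ (and $m=6$ forces equality in Clifford, hence $C$ hyperelliptic), the $g^3_m$ cannot be birationally very ample and must be compounded, with $C$ hyperelliptic or bielliptic. In either case the computation of Lemma \ref{easylemma} shows the residual is not very ample, proving (a). The non-existence halves of (c) and (d) are the same argument at the boundary: for $g=r+8$ (resp. $g=r+9$) the residual is a $g^3_{10}$ (resp. $g^3_{11}$), and $\pi(10,3)=16$ (resp. $\pi(11,3)=20$) forces it to be compounded once $g>16$ (resp. $g>20$), i.e. once $r\ge 9$ (resp. $r\ge 12$); Lemma \ref{easylemma} then applies verbatim to give emptiness.

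\textbf{Existence.} For $g=r+7$ (so $m=9$) and for $g\ge r+10$ (so $m\ge 12$) I would work on a general $k$-gonal curve $C$ and take the residual to be the \emph{compounded} net $\h{E}=|3g^1_k+D|$, with $D$ a general effective divisor of degree $a:=m-3k\ge 0$. Lemma \ref{kveryample} gives $\dim\h{E}=3$, and more importantly $\dim|\h{E}+p+q|=3$ for every $p+q\in C_2$, provided $g\ge 2(a+2)+3(k-1)$ and $D+p+q$ contains no divisor of $g^1_k$; the first is the numerical inequality $k\ge\tfrac{g-2r+5}{3}$, while the second holds for \emph{all} $p,q$ once $D$ is general and $k\ge 4$ (a degree-$k$ fibre cannot be assembled from the at most one point of a general $D$ lying in it together with $p,q$), and for $k=3$ exactly when $a=0$. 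Since $\dim|\h{E}+p+q|=\dim\h{E}$ for all $p,q$ is precisely the condition that $\h{E}^\vee=\h{D}$ be very ample, it remains to exhibit an admissible integer $k$ with $\tfrac{g-2r+5}{3}\le k\le\tfrac{g-r+2}{3}$ and $k\ge 4$ (or $k=3,\,a=0$). An elementary estimate shows such a $k$ exists precisely when $g=r+7$ (take $k=3$, $D=0$, $C$ trigonal) or $g\ge r+10$, the two values $m=10,11$ being the only gaps --- the numerical shadow of Lemma \ref{easylemma}. For the remaining boundary cases $g=r+8,\ 5\le r\le 8$ and $g=r+9,\ 5\le r\le 11$, one has $g\le\pi(m,3)$, so instead I would realize a \emph{birationally} very ample $g^3_m$ as the hyperplane series of a smooth (or nodal) curve of degree $m=10,11$ on a smooth quadric surface in $\PP^3$ and verify that its residual is very ample --- the verification that feeds into, and is carried out by, the Severi-variety analysis of the later sections. (As a sanity check, for $g\ge 4r+4$ one is in the Brill--Noether range $\rho\ge 0$ and nonemptiness is already guaranteed by the principal component of Proposition \ref{facts} as in Remark \ref{r=34}(ii).)

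\textbf{Main obstacle.} The non-existence statements reduce cleanly to Clifford's inequality, the Castelnuovo bounds $\pi(m,3)$, and Lemma \ref{easylemma}. The real work is the existence half in the sub--Brill--Noether range: one must verify, uniformly over the entire infinite family $g\ge r+10$ and all $r\ge 5$, both that the window for $k$ contains an admissible integer and --- crucially --- that the constructed net $|3g^1_k+D|$ admits \emph{no} two-point extension, i.e. that $D+p+q$ meets no divisor of $g^1_k$ \emph{simultaneously for all} $p,q$, not merely generically. It is exactly the failure of this last condition at $k=3$ with $a>0$ that forces the detour through birationally very ample space curves (and hence through Severi varieties) in the two boundary families $m=10,11$, and that is where the analysis is most delicate.
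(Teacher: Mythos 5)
Your proposal is correct in substance and its engine is the paper's: for $g\ge r+10$ you build the residual net $|3g^1_k+D|$ on a general $k$-gonal curve and invoke Lemma~\ref{kveryample} exactly as the paper does (your window $\max(4,\lceil(g-2r+5)/3\rceil)\le k\le\lfloor(g-r+2)/3\rfloor$ is a uniform repackaging of the paper's three congruence cases $e\equiv 0,1,2 \pmod 3$ with $k=\lfloor e/3\rfloor$), and the non-existence for $g=r+8$, $r\ge 9$ and $g=r+9$, $r\ge 12$ is verbatim the paper's argument: $g>\pi(e,3)$ forces $g^3_e$ to be compounded, then Lemma~\ref{easylemma}. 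You genuinely diverge in two spots. For (a) the paper just notes that $g\le\pi(g+r-4,r)$ fails for $g\le r+6$ -- a one-line Castelnuovo computation in $\PP^r$ -- whereas your route via Clifford and compoundedness of the residual $g^3_m$, $m=6,7,8$, also works but needs the extra (standard, but not literally contained in Lemma~\ref{easylemma}, which only treats the trigonal, bielliptic and genus-two cases) fact that a hyperelliptic curve carries no very ample special series. For $g=r+7$ the paper uses extremal curves with $\pi(2r+3,r)=r+7$, while your choice $k=3$, $D=0$, $|K_C-3g^1_3|$ on a general trigonal curve checks out and folds this case into the same construction. The only real shortfall is the existence half of (c): the paper proves it in-line with explicit very ample systems $(8;3^2,2^{t-1})$ on Del Pezzo surfaces and the $(5,5)$ curve re-embedded by $(2,2)$ for $r=8$, whereas you only gesture at nodal degree-$10$ or $11$ curves on a quadric with very ample residual; that verification is exactly the blow-up computation of Theorem~\ref{irreducible} and would have to be imported for your proof to close (for (d) the paper itself defers to the final section, so your deferral there is symmetric).
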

\begin{proof} (a) For $r\ge 5$, we have $g\le \pi (g+r-4,r)=2g-r-7$ if $m:=[\frac{g+r-5}{r-1}]=2$ (or $g\le \pi (g+r-4,r)=g-4$ if $m=1$) by Castelnuovo genus bound, which is not compatible with the assumption $g\le r+6$. 

\vni
\noindent
(b)  For $g=r+7$ and $r\ge 5$, we have $\pi (g+r-4,r)=r+7$. Hence the curve corresponding to a  general  element of a component of $\HL{2r+3,r+7,r}$ is an extremal curve, a curve whose arithmetic genus attains its maximal value $\pi(d,r)$. Therefore $\HL{2r+3,r+7,r}\neq\emptyset$ which is irreducible except for $r=5$; cf. \cite[Corollary 3.12, page 92]{H1}.
We now show the existence of $\HL{g+r-4,g,r}$ for every $g\ge r+10$ and $r\ge 5$. We will establish the existence of corresponding curves in $\HL{g+r-4,g,r}$ on a suitable family of $k$-gonal curves. 

\vni
Let $e:=g-r+2=\deg\h{E}=\deg |K_C-\h{D}|$;  $(p,\h{D})\in\h{G}\subset\widetilde{\h{G}}_\h{L}\subset\h{G}^r_{g+r-4}$, $C=\xi^{-1}(p)$. We distinguish the following three cases.
\begin{enumerate}
\item[(i)] $e\equiv 0 ~(\textrm{mod}~3)$: By the assumption $g\ge r+10$, we may set $e=g-r+2=3k\ge 12$ for some $k\ge 4$.  Note that $$\rho (k,g,1)=2k-g-2=-k-r\lneqq 0$$ and we consider a general $k$-gonal curve $C$.
Taking  $n=3, m=2$  in Lemma \ref{kveryample}, the numerical condition (\ref{veryamplek}) is satisfied; note that $g=3k+r-2\ge 3k+1$ for  $r\ge 3$. Furthermore there is no $F\in g^1_k$ with $F\le D$ (where $D\in C_2$) since $k\ge 4$. Hence we have $$\dim|3g^1_k+D|=3$$ for any $D\in C_2$ by Lemma \ref{kveryample} and therefore $$|K_C-3g^1_k|=g^r_{g+r-4}$$ is very ample.

\noindent
\item[(ii)] $e\equiv 1 ~(\textrm{mod}~3)$: Again, by the assumption $g\ge r+10$, we may set $e=g-r+2=3k+1\ge 12$ for some $k\ge 4$.  Note that $$\rho (k,g,1)=2k-g-2=-k-r-1\lneqq 0$$ and we again consider a general $k$-gonal curve $C$.
 For a general choice
$q\in C$ and for any $t+s\in C_2$, we take $D=q+t+s$, $m=3$ and $n=3$ in Lemma \ref{kveryample}. The numerical condition  (\ref{veryamplek}) holds ($g=3k+r-1\ge 3k+3$ if $r\ge 4$) and there is no $F\in g^1_k$ with $F\le D$ just because $k\ge 4$. Hence $\dim |3g^1_k+D|=3$ which implies that $$|K_C-3g^1_k-q|=g^{r}_{2g-2-3k-1}=g^r_{g+r-4}$$ is very ample.

\noindent
\item[(iii)] $e\equiv 2 ~(\textrm{mod}~3)$: Again, by the assumption $g\ge r+10$, we may set $e=g-r+2=3k+2\ge 12$ for some $k\ge 4$.  Note that $$\rho (k,g,1)=2k-g-2=-k-r-2\lneqq 0$$ and we consider a general $k$-gonal curve $C$.
 We take $q+q'\in C$ such that $q+q'$ is not in the same fiber of the $k$-sheeted map onto $\PP^1$ defined by the unique $g^1_k$.
For any $t+s\in C_2$, we take $D=q+q'+t+s$, $m=4$ and $n=3$ in Lemma \ref{kveryample}. Again the numerical condition  (\ref{veryamplek}) holds (by $r\ge 5$) and there is no $F\in g^1_k$ with $F\le D$ by our choice of $q+q'\in C_2$. Hence $\dim |3g^1_k+D|=3$ which implies that $$|K_C-3g^1_k-q-q'|=g^{r}_{2g-2-3k-2}=g^r_{g+r-4}$$ is very ample.

\noindent
We remark that if $r=3$ and  $g-1$ is a multiple of $3$, the proof (i) in the above is still valid and $$|K_C-3g^1_k|=g^3_{g-1}$$  is very ample.  Hence $\HL{g-1,g,3}\neq\emptyset$ for $g$ such that $g-1\equiv 0 ~(\textrm{mod}~3)$.
Likewise, for $r=4$ and if $g-2$ is a multiple of $3$, then $$|K_C-3g^1_k|=g^4_g$$ is very ample. Moreover, if $g-2=3k+1$ for some $k\ge 4$ and $r=4$, the proof (ii) in the above remains valid and one may deduce that
$$|K_C-3g^1_k-q|=g^4_{g}$$ is very ample for a general $q\in C$. From this observation we  have the 
existence of a smooth linearly normal curves of degree $d$ and genus $g$ in $\PP^4$ for $(d,g)=(11,11), (14,14), (15,15), (17,17), (18,18)$; cf. Table 1.
\end{enumerate}

\vni
(c) We show the non-existence of $\h{H}^\h{L}_{g+r-4,g,r}$ in case $g=r+8$ and  $r\ge 9$ ($g=r+9$ and $r\ge 12$, respectively) as follows. We set $\h{E}:=g^3_{g-r+2}=g^3_{e}=\h{D}^\vee$ for a general very ample $(p,\h{D})\in\h{G}\subset\widetilde{\h{G}}_\h{L}\subset\h{G}^r_{g+r-4}$ which has degree $e=10$ ($e=11$, respectively) for $g=r+8$ (for $g=r+9$, respectively). If $g=r+8 \gneqq \pi (10,3)=16$ (if $g=r+9 \gneqq \pi (11,3)=20$, respectively), $\h{E}$ is compounded inducing 
a $k$-sheeted map onto a curve of degree $f\ge 3$ in $\PP^3$. Since $k\cdot f\le e$, a simple numerical calculation leads to the following  possibilities; 

$(k,f)=(2,5)$ and $C$ is a double cover of a curve of genus $h=2$

$(k,f)=(2,4)$ and $C$ is bielliptic

$(k,f)=(2,3)$ and $C$ is hyperelliptic

$(k,f)=(3,3)$ and $C$ is trigonal and $\h{E}$ has non-empty base locus.

\noindent
However $\h{D}=\h{E}^\vee$ is not very ample by Lemma \ref{easylemma}, leading to a contradiction. 

\vni
\noindent
For the existence of $\h{H}^\h{L}_{g+r-4,g,r}=\h{H}^\h{L}_{2r+4,r+8,r}$ with $5\le r\le 8$ and $g=r+8$, we make the following usual convention. Let $S_{t+1}\stackrel{\pi}{\rightarrow}\PP^2$ be the surface blown up $\PP^2$ at $t+1$ general points $\{p_1,\cdots ,p_{t+1}\}\subset\PP^2$ with exceptional divisors $E_1,\cdots , E_{t+1}$ whose linear equivalence classes are denoted by $\{e_i\}_{i=1}^{t+1}$. Let $l\in \text {Pic}(S_{t+1})$ be the class of $\pi^*(L)$, where $L\subset\PP^2$ is a line.
We use the abbreviation 
$$(a;b_1,\cdots ,b_{t+1})$$ for the linear system 
$$|\pi^*(aL)-\sum_{i=1}^{t+1} b_iE_i| $$ on $S_{t+1}$.  

\vni
\noindent
Smooth curves in the very ample   linear system $(8;3^2,2^2):=(8;3,3,2,2)$ on $S_4$
embedded by the 
anti-canonical linear system $(3;1^4):=(3;1,1,1,1)$ has degree 
$$(8;3^2,2^2)\cdot (3;1^4)=14=2\cdot 5+4=2r+4$$ in $\PP^5$ and   genus 
$$g=p_a(8l)-2\cdot 3-2\cdot 1=13=5+8=r+8.$$ Likewise, a smooth curve in $(8;3,3,2)$ ($(8;3,3)$ resp.) is embedded
into $\PP^6$ ($\PP^7$ resp.)  as a curve of degree $d=16$ ($18$ resp.) and genus $g=14$ ($g=15$ resp.).
For $r=8$, a curve of degree $e=10$ and genus $g=16$ in $\PP^3$ - which is a (smooth) extremal curve of type $(5,5)$ on a (smooth) quadric surface $X$ - is embedded into $\PP^8$ by the $2$-tuple embedding
$$\PP^1\times\PP^1\stackrel{(2,2)}{\hookrightarrow}\PP^2\times\PP^2\subset\PP^8$$ induced by the linear system
$|\h{O}_{\PP^1\times\PP^1}(2,2)|$
on $\PP^1\times\PP^1$  as a curve of degree $$d=(5,5)\cdot (2,2) =2\cdot e=20=2\cdot  8+4.$$ We also see that   $\h{H}^\h{L}_{2r+4,r+8,r}=\h{H}^\h{L}_{20,16,8}$ is irreducible, which can be argued as follows. We recall that a  general $\h{E}\in\h{G}^\vee\subset\h{G}^3_e=\h{G}^3_{10}$
($\h{G}$ is a component of $\widetilde{\h{G}}_\h{L}$)  induces a morphism onto a smooth curve of type $(5,5)$ on a smooth quadric in $\PP^3$. On the other hand,  curves of type $(5,5)$ on smooth quadrics in $\PP^3$ form the {\bf only} irreducible component of  $\HL{10,16,3}$.  Therefore the family of curves induced by the residual series of the (very ample) hyperplane series of general members of the irreducible Hilbert scheme $\HL{10,16,3}$ is also irreducible, which is our $\HL{20,16,8}$. 
 $\h{H}^\h{L}_{20,16,8}$ has  dimension more than expected;
\begin{eqnarray*}
\dim\h{G}&=&\dim\h{G}^\vee=\dim\PP H^0(\PP^1\times\PP^1, \h{O}_{\PP^1\times\PP^1}(5,5))\\&+&\dim\PP H^0(\PP^3,\h{O}_{\PP^3}(2))-\dim\PP GL(4)>\lambda (d,g,r).
\end{eqnarray*}

\noindent
We will see in the next section that curves of this type -- curves embedded into $\PP^r$ by the residual series of the hyperplane series of appropriate (nodal) curves on a smooth quadric in $\PP^3$ -- form the only irreducible component of  $\HL{2r+4,r+8,r}$ for $5\le r\le 8$ except  $r=7$. For $r=7$, there exists another irreducible component consisting of  images of  smooth curves in $|\h{O}_{\PP^1\times\PP^1}(4,6)|$ on smooth quadrics via the embedding 
$$\PP^1\times\PP^1\stackrel{(1,3)}{\hookrightarrow}\PP^1\times\PP^3\subset\PP^7$$
which have degree $(4,6)\cdot (1,3)=18=2\cdot 7+4$. The very ampleness of linear systems $(a; b_1,\cdots ,b_{t+1})$ on $S_{t+1}$ appearing in this proof follows from \cite[Theorem]{sandra}.

\vni
(d) This will be treated in the final section together with some partial irreducibility result; cf.  Table \ref{3r11}.
\end{proof}

\noindent
The following table exhibits the existence of smooth curves of degree $d=g$ in $\PP^4$, which is a partial application of the proof of Theorem \ref{main} (b). We stress that curves described in the list may not form an open dense subset of a component. Furthermore the irreducibility of $\HL{g,g,4}$ is not known (to the author) in general except for some particular cases. A smooth curve of degree $d=11$ and genus $g=11$ in $\PP^4$ is not extremal but nearly extremal ($\pi_1(d,4)<g<\pi (11,4)$), which  lies on a rational normal scroll in $\PP^4$ and such curves form one irreducible family; cf.  \cite[Corollary 3.16, p. 100]{H1}. The irreducibility of $\HL{d,g,4}$ for  $(d,g)=(12,12)$ needs more argument,  which will be shown in the section following the next section. In the table, $X$ denotes a smooth quadric surface in $\PP^3$.

\begin{table}[ht]
\caption{ Smooth curves in $\h{H}^\h{L}_{g,g,4}$ for $11\le g\le 19$} 
\centering 
\begin{tabular}{c c c c c c} 
\hline\hline 
{\footnotesize $(d,g)$} & {\footnotesize Description }  & {\footnotesize Remark} \\ [0.5ex] 
\hline 
\hline 
{\footnotesize $(11,11)$} & {\footnotesize A curve in $|3H+2L|$ on a rational normal scroll in $\PP^4$ }  & {\footnotesize Trigonal} \\
{\footnotesize } & {\footnotesize with a very ample $|K_C-3g^1_3|$}  & {\footnotesize}\\
{\footnotesize $(12,12)$} & {\footnotesize A smooth member in $(8; 3^2,2^3):=(8; 3,3,2,2,2)$} & {\footnotesize Pentagonal}  \\ 
{\footnotesize } & {\footnotesize  on a Del Pezzo, and  is birational to a curve in $|(5,5)|$} & {\footnotesize }  \\
{\footnotesize } & {\footnotesize on $X\subset\PP^3$ with $4$ nodes} & {\footnotesize }  \\
{\footnotesize $(13,13)$} & {\footnotesize A smooth member in $(8; 3,3,2,2,1)$ on a Del Pezzo, } & {\footnotesize Pentagonal}  \\ 
{\footnotesize } & {\footnotesize birational to a curve in $|(5,5)|$ on $X\subset\PP^3$ with $3$ nodes}  & {\footnotesize }  \\
{\footnotesize $(14,14)$} & {\footnotesize  A curve with a very ample $|K_C-3g^1_4|$  }& {\footnotesize Tetragonal} \\ 
{\footnotesize $(15,15)$} & {\footnotesize A curve with a very ample $|K_C-3g^1_4-q|$  }& {\footnotesize Tetragonal} \\ 
{\footnotesize $(16,16)$} & {\footnotesize On a Bordiga surface}  & {\tiny \cite[Theorem 1.0.2]{rathman} }  \\
{\footnotesize $(17,17)$} & {\footnotesize A curve with a very ample $|K_C-3g^1_5|=g^4_{17}$}  & {\footnotesize Pentagonal}  \\
{\footnotesize $(18,18)$} & {\footnotesize A curve with a very ample $|K_C-3g^1_5-q|=g^4_{18}$}  & {\footnotesize Pentagonal}  \\
{\footnotesize $(19,19)$}& {\footnotesize On a Bordiga surface}  & {\tiny \cite[Theorem 1.0.2]{rathman} }  \\ [1ex] 
\hline 
\end{tabular}
\label{gg4} 
\end{table}
\section{Irreducibility of $\HL{g+r-4,g,r}$ for $g=r+8$ }
For $g=r+8$, $\HL{g+r-4,g,r}$ is non-empty only for $3\le r\le 8$ by Remark \ref{r=34} and Theorem \ref{main}. In the next two sections, we determine the irreducibility of $\HL{2r+4,r+8,r}$.
For this, we need to recall a couple of generalities regarding the Severi variety of nodal curves on a smooth quadric surface $X\subset\PP^3$.

\begin{deff}
\begin{enumerate}
\item[(i)]
Let $X=\PP^1\times\PP^1\subset\PP^3$ be a smooth quadric surface and we fix $\h{M}=|\h{O}_{\PP^1\times\PP^1}(a,b)|$ a very ample linear system on $X$. 
Let $$p_a(\h{M})=(a-1)(b-1)$$ be the arithmetic genus of curves belonging to $\h{M}$. 

\item[(ii)]
Given an integer $g$ such that $0\le g\le p_a(\h{M})$, we set
 $\delta =p_a(\h{M})-g$. We denote by $$\Sigma_{\h{M}, \delta}\subset |\h{M}|=\PP(H^0(X,\h{M}))$$ the (equi-singular) Severi variety which is the closure of the locus of integral curves in the linear system $\h{M}$ whose singular locus consists of exactly $\delta$ nodes. 
\item[(iii)]
Let $\Sigma_{\h{M},g}$ be the  (equi-generic) Severi variety which is the closure of the locus of integral curves of geometric genus $g$  in the linear system $\h{M}$. 
\end{enumerate}
\end{deff}

\noindent
We shall make use the following well-known results adopted for our current specific situation. The results quoted below are known to be true in a more general context. Readers are advised to refer \cite{DS}, which provides an excellent account on Severi varieties on surfaces in general. 

\begin{rmk}\label{Severi}
\begin{enumerate}
\item[(i)] A general member of every irreducible component of the equi-generic Severi variety $\Sigma_{\h{M},g}$ is a nodal curve; cf. \cite[Proposition 2.1]{H2}.
\item[(ii)] The equi-singular Severi variety $\Sigma_{\h{M}, \delta}$ is  irreducible of dimension $$\dim|\h{M}|-\delta,$$
if non-empty; cf. \cite[Proposition 2.11, Theorem 3.1]{tyomkin}.
\end{enumerate}
\end{rmk}
\begin{defrmk}
\begin{enumerate}\item[(i)] Let $\h{G}_{\Sigma_{\h{M},g}}\subset \h{G}^3_{e}$ be  the locus which the  irreducible equi-generic Severi variety $\Sigma_{\h{M},g}$ sits over. To be precise, let $\h{G}_{\Sigma_{\h{M},g}}\subset \h{G}^3_{e}$ be the locus consisting of (complete) webs cut out by hyperplanes on space curves corresponding to  general elements of  the Severi variety ${\Sigma_{\h{M},g}}$. We note that an open subset of $\Sigma_{\h{M},g}$ consisting of nodal curves is an $\textrm{Aut}(\PP^1\times\PP^1)$-bundle over an open subset of $\h{G}_{\Sigma_{\h{M},g}}=\h{G}_{\Sigma_{\h{M},\delta}}$.  Since $\Sigma_{\h{M},g}$ is irreducible,  $\h{G}_{\Sigma_{\h{M},g}}$ is  irreducible.
\item[(ii)] For each component $\h{G}\subset\widetilde{\h{G}}_\h{L}\subset\h{G}^r_d$, we set 
$$\h{G}^\vee :=\{\h{D}^\vee|\h{D}\in\h{G}\}\subset\h{G}^3_e$$
where $\h{D}^\vee$ is the residual series of $\h{D}\in\h{G}$, $e=2g-2-d$ and $d=g+r-4$.
We also set 
$$\h{G}':=\underset{\h{G}\subset\widetilde{\h{G}}_\h{L}}{\bigcup} \h{G}^\vee\subset\h{G}^3_e$$

\item[(iii)]
We will determine the irreducibility of $\HL{d,g,r}$ ($d=g+r-4, ~g=r+8, ~5\le r\le 8$)  according to the following intermediate steps.  
\vni
(a)
We first check that for a general $\h{D}\in\h{G}$,
$\h{E}=\h{D}^\vee$ is birationally very ample and induces a morphism into a quadric surface in $\PP^3$. 

\vni
(b) We then collect all the possible $\h{M}_j=|\h{O}_{\PP^1\times\PP^1}(c_j,d_j)|\in Pic(X)$ such that 
$c_j+d_j\le e$ and $p_a(\h{M}_j)\ge g$. Note that this is just a simple numerical game which is not too complicated, especially when  the value $e=\deg\h{E}=\deg \h{D}^\vee$ is low. Hence it trivially follows that 
 $$\h{G}'=\cup \h{G}^\vee\subset \underset{j}{\cup}~\h{G}_{\Sigma_{\h{M}_j,g}.}$$

\vni
(c)
For each $\h{M}_j$,  we  determine if  a general element of $\h{G}_{\Sigma_{\h{M}_j,g}}$
has a very ample residual series, 
 which implies   $$\h{G}_{\Sigma_{\h{M}_j,g}}\subset\h{G}'. $$
 
 \item[(iv)]
 If there is only one $\h{M}_j$ left such that $\h{G}_{\Sigma_{\h{M}_j,g}}\subset\h{G}'$, say $\h{M}_j=\h{M}_0$ we then may deduce that $ \h{G}_{\Sigma_{\h{M}_0,g}}=\h{G}'.$
 
 \item[(v)]
The irreducibility of $\widetilde{\h{G}}_\h{L}\stackrel{bir}{\cong}\h{G}'=\h{G}_{\Sigma_{\h{M}_0,g}}$  follows from the irreducibility of  ${\Sigma_{\h{M}_0,g}}$ or 
$\h{G}_{\Sigma_{\h{M}_0,g}}$.
\end{enumerate}

\end{defrmk}

\vni
\noindent
We deal with the case $5\le r\le 8$ using the above roadmap in the following theorem. The proofs of the irreducibility of $\HL{2r+4,r+8,r}$ for $r=3,4$ are of somewhat different flavor which shall be presented in the next section.
\begin{thm}\label{irreducible} For $5\le r\le 8$, $\HL{2r+4,r+8,r}$ is irreducible unless $r=7$.
\end{thm}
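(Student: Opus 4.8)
The plan is to apply the six-step reduction scheme recorded in the preceding Notations and Remark to the single value $e=2g-2-d=10$, so that all relevant residual series live in $\h{G}^3_{10}$. First I would check that for a general $\h{D}$ in any component $\h{G}\subset\widetilde{\h{G}}_\h{L}$ the residual series $\h{E}=\h{D}^\vee$ is a base-point-free, birationally very ample $g^3_{10}$ carrying the normalization $C$ birationally onto a nondegenerate curve $\Gamma$ on a \emph{smooth} quadric $X=\PP^1\times\PP^1\subset\PP^3$. Indeed, if $\h{E}$ were compounded then Lemma \ref{easylemma} would force $\h{D}=\h{E}^\vee$ to fail to be very ample, contradicting $\h{D}\in\widetilde{\h{G}}_\h{L}$; if $\h{E}$ had a base point its birational image would have degree $\le 9$ and genus $\le\pi(9,3)=12<13\le g$, which is absurd; and since $g=r+8\ge13$ exceeds the second Castelnuovo bound $\pi_1(10,3)$, the degree-$10$ image $\Gamma$ must lie on a quadric. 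Writing the bidegree of $\Gamma$ as $(c,d)$ with $c+d=10$, the constraint $p_a(c,d)=(c-1)(d-1)\ge g$ leaves exactly the two bidegrees $(5,5)$ and $(4,6)$ when $5\le r\le 7$, and only $(5,5)$ when $r=8$; a general member of the corresponding $\Sigma_{\h{M},g}$ is then $\delta$-nodal with $\delta=p_a(\h{M})-g$.

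Next I would show that the $(5,5)$ stratum always contributes, i.e. that a general member of the irreducible family $\h{G}_{\Sigma_{(5,5),g}}$ has very ample residual series. By adjunction on $X$ this residual is $\h{D}=\nu^*\h{O}_X(2,2)-\sum_i(p_i+q_i)$, the conics $(2,2)$ through the $\delta=8-r$ nodes pulled back to $C$. The decisive numerical point is that each ruling class $\mathfrak{f}$ meets $\h{O}_X(2,2)$ in two points: a conic through all the nodes and through a chosen point still meets the ruling through that point in one further point, hence can be made to avoid any other prescribed point, while a conic acquiring a node at $n_i$ resolves the two branches there. A short count --- conics through $\delta\le3$ general nodes form a linear system of projective dimension $8-\delta>0$ --- then shows $\h{D}$ separates all points and tangent vectors, so $\h{G}_{\Sigma_{(5,5),g}}\subset\h{G}'$, and it is irreducible by Remark \ref{Severi}(ii).

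The crux is to prove that, in contrast, the $(4,6)$ stratum does \emph{not} contribute once $r\le 6$, i.e. that the residual of a general $\delta$-nodal $(4,6)$ curve fails to be very ample as soon as $\delta=15-g\ge1$. Here $\h{D}=\nu^*\h{O}_X(1,3)-\sum_i(p_i+q_i)$, and the governing fact is $\h{O}_X(1,3)\cdot\mathfrak{f}=1$ for the ruling class $\mathfrak{f}$ with $(4,6)\cdot\mathfrak{f}=4$. The ruling line $\ell\in|\mathfrak{f}|$ through a node $n_i$ meets $\Gamma$ at $n_i$ (with multiplicity two) and at two further smooth points $x,y$; any member of $|\h{O}_X(1,3)|$ through $n_i$ meets $\ell$ only at $n_i$ unless it contains $\ell$, in which case it contains both $x$ and $y$. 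Thus every section of $\h{D}$ vanishing at $x$ also vanishes at $y$, so $\h{D}$ cannot separate $x$ and $y$ and is not very ample; hence $\h{G}_{\Sigma_{(4,6),g}}\not\subset\h{G}'$. The very same computation accounts for the exceptional value $r=7$: there the general $(4,6)$ curve is smooth ($\delta=0$), no node forces a section through a ruling point, $\h{O}_X(1,3)|_\Gamma$ is the restriction of the Segre embedding $X\hookrightarrow\PP^7$ and is therefore very ample, so $(4,6)$ does yield a second component and $\HL{18,15,7}$ is reducible.

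Finally, for $r=5,6,8$ the only surviving bidegree is $(5,5)$, so by the last two steps of the scheme every component $\h{G}^\vee\subset\h{G}'$ must land in the one stratum $\h{G}_{\Sigma_{(5,5),g}}$ contained in $\h{G}'$, and a dimension comparison identifies it with that irreducible locus, giving $\h{G}'=\h{G}_{\Sigma_{(5,5),g}}$. Since $\widetilde{\h{G}}_\h{L}\stackrel{bir}{\cong}\h{G}'$ and an open subset of $\HL{2r+4,r+8,r}$ is a $\PP\textrm{GL}(r+1)$-bundle over an open subset of $\widetilde{\h{G}}_\h{L}$, irreducibility of $\h{G}_{\Sigma_{(5,5),g}}$ propagates to the Hilbert scheme. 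I expect the real work to lie in two places: confirming the very ampleness bookkeeping for the $(5,5)$ family at the maximal node number $\delta=3$ (one must know that general nodes neither lie on a common ruling nor make the conic system fail to separate), and excluding the quadric cone as the target of the general residual map --- a special, lower-dimensional possibility the argument above has quietly set aside. The non-very-ampleness of the nodal $(4,6)$ series, once the one-point ruling intersection is observed, is by comparison immediate.
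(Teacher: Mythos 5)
Your proposal is correct and follows essentially the same route as the paper: reduce to the residual $g^3_{10}$ mapping to a smooth quadric, observe that only the bidegrees $(5,5)$ and $(4,6)$ can occur, show the $(5,5)$ stratum has very ample residual series while the $(4,6)$ stratum does not because the $4$-secant ruling through a node gets contracted (this is exactly the paper's computation that $(3;2,0,1^{s-1})\cdot e_2=0$ while $\widetilde{C}_\h{E}\cdot e_2=2$ on the blow-up), treat $r=7$ as the case where $(4,6)$ is smooth and yields a genuine second component, and conclude from the irreducibility of the Severi variety $\Sigma_{\h{M},\delta}$. The two loose ends you flag are closed in the paper precisely by blowing up the nodes and quoting known results: the residual of the $(5,5)$ family becomes the anticanonical system $(3;1^{t+1})$ on the Del Pezzo surface $S_{t+1}$, whose very ampleness is Di Rocco's theorem, and the quadric cone is excluded because, by Brevik's and Hartshorne's results, curves on a cone are specializations of curves on a non-singular quadric.
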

\begin{proof} Set  $\h{E}=g^3_{10}=\h{D}^\vee$ for a general $\h{D}\in\h{G}\subset\widetilde{\h{G}}_\h{L}\subset\h{G}^r_d$. We recall that $\h{E}$ is birationally very ample; cf. Lemma  \ref {easylemma} or the proof of the non-existence of $\HL{2r+4,r+8,r}$ for $r\ge 9$ in Theorem \ref{main} (c). Furthermore, $\h{E}$ is  base-point-free if $g=r+8\ge 13$; if $\h{E}$ has non-empty base locus, then $g\le \pi (9,3)=12$. We also note that the image curve $C_{\h{E}}\subset\PP^3$ induced by $\h{E}$ lies on a quadric surface; if $C_{\h{E}}$ does not lie on a quadric, then one has $$g\le p_a(C_{\h{E}})\le \pi_1(10,3)=\frac{(10-1)(10-2)}{6}=12.$$ 
Since curves on a quadric cone is a specialization of curves on a non-singular quadric surface by \cite[Chapt. 2]{Zeuthen} or \cite[Theorem, page 74]{Brevik},
we may assume that $C_\h{E}$ lies on a non-singular quadric surface.
We note that for 
every genera $g=13,14,15,16$ there exists an integral curve  of degree $10$ lying on a non-singular quadric surface $X\subset\PP^3$ with $\delta = 0,1,2,3$ nodes as its only singularities and hence $\Sigma_{\h{M},\delta}\neq\emptyset$ for some $\h{M}\in Pic(X)$
; cf. \cite[Corollary 4.6]{AC1}. 

\vni
\noindent The case $r=8$ was treated already in the proof of Theorem \ref{main}, and we assume $5\le r\le 7$. In the previous section, we demonstrated the existence of $\HL{2r+4,r+8,r}$ by showing that a general member in a certain very ample linear system $(a; b_1,\cdots , b_{t+1})$ on $S_{t+1}\rightarrow \PP^2$ is embedded into a Del-Pezzo surface in $\PP^r = \PP^{9-t-1}$ as a curve of degree $d=2r+4$ and genus $g=r+8$; say $a=8, b_1=b_2=3$ and  $b_i=2$ ($3\le i\le t+1$) if $t=2,3$. Here we show that these are the only possible family of curves if $r\neq 7$. Indeed, what we shall show is that $C_\h{E}\in\Sigma_{\h{M},\delta}$ for $\delta =t=2,3$ where $\h{M}=|\h{O}_{\PP^1\times\PP^1}(5,5)|$.

\vni
\noindent
Recall that a general element of any component of the equi-generic 
Severi variety $\Sigma_{\h{M},g}$ is a nodal curve by Remark \ref{Severi} (i), and hence we may assume that $C_\h{E}$ is a nodal curve with exactly $t$ nodes $q_1,\cdots ,q_t$.

\vni
\noindent Let $C_\h{E}\in |\h{O}_{\PP^1\times\PP^1}(c,d)|$. Since $\h{E}$ is base-point-free, $$c+d=10 \textrm{ and } g\le p_a(C_\h{E})=(c-1)(d-1),$$ there are just two possibilities for $(c,d)$ if $13\le g\le 15$;   $(c,d)=(5,5)$ or $(4,6)$. Hence we have 
 $$\h{G}'=\cup\h{G}^\vee\subset\h{G}_{\Sigma_{\h{M}.g}}\cup~ \h{G}_{\Sigma_{\h{N}.g}},$$
 where $\h{M}= |\h{O}_{\PP^1\times\PP^1}(5,5)|$ and $\h{N}= |\h{O}_{\PP^1\times\PP^1}(4,6)|$. 
 
 \vni
 \noindent
First assume that  $\h{M}= |\h{O}_{\PP^1\times\PP^1}(5,5)|$ i.e. $C_\h{E}\in \h{M}$,  $g=p_a(C_\h{E})-t=16-t$, where $t=1,2,3, 4$. As a matter of fact, we only need up to $t\le 3$ in this theorem. However, in order to handle the case $g=12$ and $r=4$ in the next section, we assume $t\le 4$ and the all the computations remain valid.  

\vni
\noindent
First we blow up the non-singular quadric $X$ at $q_1$ to get surface $S_2$ which is the surface blown up at two points in the projective plane; recall that a non-singular quadric in $\PP^3$ is a blowing up of $\PP^2$ at two points and then blown down along the proper transformation of the line through the two points. 

\noindent
If  $t\ge 2$, we first blow up at $q_1$ to get $S_2$ and then blow up  at the remaining points $\tilde{q}_2. \cdots , \tilde{q}_t$ -- which are the inverse images of $q_2, \cdots q_t$ under the first blow up $S_2\rightarrow X$ at $q_1$ -- arriving at the surface $S_{t+1}$. Now the proper transformation $\widetilde{C}_\h{E}\subset S_{t+1}$ of $C_\h{E}$ under these successive blow ups is smooth since $C_\h{E}$ is a nodal curve. We set
$$\widetilde{C}_\h{E}\in (a; b_1,\cdots b_{t+1}).$$  
Since the exceptional divisor of the first blow up at the node $q_1\in C_\h{E}\subset X$ is $l-e_1-e_2$, 
\begin{equation}
\widetilde{C}_\h{E}\cdot (l-e_1-e_2)=a-b_1-b_2=2.
\end{equation}
Since the proper transformations of the two rulings of $X\subset\PP^3$ -- which are of  the classes $l-e_i  ~(i=1,2)$ on $S_{t+1}$ -- cut out two distinct $g^1_5$'s on $\widetilde{C}_\h{E}$, we have
\begin{equation}
(l-e_i)\cdot (a;b_1,\cdots ,b_{t+1})=a-b_i=5,  ~i=1,2.
\end{equation}

\noindent
Let $C^1_\h{E}\subset S_2$ be the proper transformation of $C_\h{E}\subset\PP^3$ under the first blow up $S_2\rightarrow X$ at $q_1$; hence $C^1_\h{E}=\widetilde{C}_\h{E}$ if $t=1$.
For $t\ge2$, note that  $\tilde{q}_2, \cdots , \tilde{q}_t$  remain as  nodal singular points of $C^1_\h{E}$ and hence
$$e_{i+1}\cdot (a;b_1,\cdots ,b_{t+1})=b_{i+1}=2, \quad i=2,\cdots ,t$$ 
where $e_{i+1}$ ($i=2,\cdots ,t$) are the exceptional divisors over $\tilde{q}_i$ under the blow up $S_{t+1}\rightarrow S_2$.

\vni
\noindent
Solving equations (2) and (3),  we have $(a; b_1, b_2)=(8;3,3)$ and hence
$$\widetilde{C}_\h{E}\in(a;b_1, b_2,b_3, \cdots,b_{t+1})=(8;3,3,2,\cdots,2)=(8;3^2,2^{t-1}).$$
We check that $\h{E}^\vee$ is very ample. We have
\begin{align*}K_{S_{t+1}}+\widetilde{C}_{\h{E}}&=(-3;-1,-1,-1,\cdots,-1)+(8;3,3,2,\cdots,2)
\\&=(-3;-1^{t+1})+(8;3^2,2^{t-1})=(5;2^2, 1^{t-1}).
\end{align*}
Since $\h{E}$ is cut out by $|2l-e_1-e_2|=(2;1^2,0^{t-1})$ on $\widetilde{C}_\h{E}$,  $\h{E}^\vee$ is cut out on $\widetilde{C}_\h{E}$ by 
\begin{align*}|K_{S_{t+1}}&+\widetilde{C}_\h{E}-(2l-e_1-e_2)|=|(5;2^2, 1^{t-1})-(2l-e_1-e_2)|\\&=|(5;2^2, 1^{t-1})-(2;1^2,0^{t-1}|=
|(3;1^{t+1})|
\end{align*} which is very ample on $S_{t+1}$. Therefore, $\h{E}^\vee$ is very ample which gives an embedding  $$\widetilde{C}_\h{E}\hookrightarrow \PP^{9-(t+1)}=\PP^{8-t}=\PP^r$$ as a curve of degree 
\begin{align*}d&=(K_{S_{t+1}}+\widetilde{C}_\h{E}-(2l-e_1-e_2))\cdot\widetilde{C}_\h{E}= (3,1^{t+1})\cdot (8,3^2,2^{t-1})\\&=24-6-2\cdot(t-1)=2\cdot (8-t)+4=2r+4,
\end{align*}
 and therefore we have  $$\h{G}_{\Sigma_{\h{M},g}}\subset \h{G}'=\cup\h{G}^\vee.$$ 
\noindent
\vni
We now check if 
$\h{G}_{\Sigma_{\h{N},g}}\subset\h{G}'. $
Let $\h{E}\in  \h{G}_{\Sigma_{\h{N},g}}$, i.e. $C_\h{E}\in  |\h{O}_{\PP^1\times\PP^1}(4,6)|$. 

\noindent
(a)
Assume $g=15$ (and $r=7$).
In this case,  $C_\h{E}$ is smooth and  $\h{E}^\vee$ is cut out by 
$$|K_X+C_\h{E}-(1,1)|=|(-2,-2)+(4,6)-(1,1)|=|(1,3)|$$ which is very ample on $X$ inducing an embedding  
$$C_\h{E}\subset\PP^1\times\PP^1\stackrel{(1,3)}{\hookrightarrow} \PP^1\times\PP^3\subset\PP^7$$ as a curve of degree $$(4,6)\cdot (1,3)=12+6=18=2r+4.$$ 
Therefore we have $\h{G}_{\Sigma_{\h{N},g}}\subset\h{G}'$ and hence 
$$\h{G}'=\cup\h{G}^\vee =\h{G}_{\Sigma_{\h{N},g}}\cup\h{G}_{\Sigma_{\h{M},g}}, $$
showing that  
$$\widetilde{\h{G}}_\h{L}=(\h{G}_{\Sigma_{\h{N},g}})^\vee\cup(\h{G}_{\Sigma_{\h{M},g}})^\vee$$
is reducible with two distinct irreducible components of the same dimension; 
\begin{eqnarray*}
\dim\h{G}_{\Sigma_{\h{M},g}}&&=\dim\Sigma_{\h{M},g}-\dim\textrm{Aut}(X)=\dim\Sigma_{\h{M},\delta}-\dim\textrm{Aut}(X)\\&&=\dim |\h{O}_{\PP^1\times\PP^1}(5,5)|-1-\dim\textrm{Aut}(X)=28\\
\dim\h{G}_{\Sigma_{\h{N},g}}&&=\dim |\h{O}_{\PP^1\times\PP^1}(4,6)|-\dim\textrm{Aut}(X)=28.
\end{eqnarray*}
(b) For $r=5,6$ ($g=13,14$) we show the irreducibility of $\h{G}'$ i.e. $$\h{G}'=\h{G}_{\h{M},g}$$ by showing that  $$\h{G}_{\h{N},g}\nsubseteq\h{G}'$$
which is enough to show that the residual series of a  general element of $\h{G}_{\Sigma_{\h{N},g}}$ is not very ample. We carry out the same computation as we did for the case $\h{M}= |\h{O}_{\PP^1\times\PP^1}(5,5)|$. Since $C_\h{E}\in  |\h{O}_{\PP^1\times\PP^1}(4,6)|$, 
we set $g=p_a(C_\h{E})-s=15-s$, where $s=\delta=1,2,3$; here we only need up to $s\le 2$. However in order to handle the case $r=4$ in the next section, we make the computation even for $s=3$.
For $s=\delta =1$ and $g=14$, we blow up $X$ at the node $q_1\in C_\h{E}\subset X$ to get $S_2$. 
For $s\ge 2$, we first blow up $X$ at a node $q_1$ to get $S_2$ and then blow up  at the remaining nodes $\tilde{q}_2, \cdots , \tilde{q}_s$ -- which are the inverse images of remaining nodes $q_2, \cdots q_s$ of $C_\h{E}$ under the first blow up $S_2\rightarrow X$ at $q_1$ -- arriving at the surface $S_{s+1}$. The proper transformation $\widetilde{C}_\h{E}\subset S_{s+1}$ of $C_\h{E}$ under these successive blow ups is smooth and we set
$$\widetilde{C}_\h{E}\in (a; b_1,\cdots b_{s+1}).$$  
Since $q_1\in C_\h{E}\subset X$ is a node and the exceptional divisor over $q_1$ is $l-e_1-e_2$, 
\begin{equation}
\widetilde{C}_\h{E}\cdot (l-e_1-e_2)=a-b_1-b_2=2.
\end{equation}
Since the proper transformations of the two rulings of $X\subset\PP^3$ -- which are of the classes $l-e_i$ ($i=1,2$) on $S_{s+1}$ -- cut out two  base-point-free pencils  $g^1_4$ and  $g^1_6$ on $\widetilde{C}_\h{E}$, we have
\begin{equation}
(l-e_1)\cdot\widetilde{C}_\h{E}=a-b_1=4, ~~
(l-e_2)\cdot\widetilde{C}_\h{E}=a-b_2=6.
\end{equation}
Let $C^1_\h{E}\subset S_2$ be the proper transformation of $C_\h{E}\subset\PP^3$ under the first blow up $S_2\rightarrow X$ at $q_1$; hence $C^1_\h{E}=\widetilde{C}_\h{E}$ if $s=1$.
For $s\ge2$, note that  $\tilde{q}_2, \cdots , \tilde{q}_s$  remain as  nodal singular points of $C^1_\h{E}$ and hence
$$e_{i+1}\cdot (a;b_1,\cdots ,b_{s+1})=b_{i+1}=2, \quad i=2,\cdots ,s$$ 
where $e_{i+1}$ ($i=2,\cdots ,s$) are the exceptional divisors over $\tilde{q}_i$ under the blow up $S_{s+1}\rightarrow S_2$.

\noindent
Solving the equations (4) and (5), we  have $(a; b_1, b_2)=(8;4,2)$
and hence
$$\widetilde{C}_\h{E}\in(a;b_1, b_2,b_3, \cdots,b_{s+1})=(8;4,2,2^{s-1}).$$
We claim that $\h{E}^\vee$ is not very ample. We have
$$K_{S_{s+1}}+\widetilde{C}_{\h{E}}=(-3;-1^{s+1})+(8;4,2,2^{s-1})=(5;3,1^{s}).$$ 
Since $\h{E}=g^3_{10}$ is cut out by $|2l-e_1-e_2|=(2;1^2,0^{s-1})$ on $\widetilde{C}_\h{E}$, $\h{E}^\vee$ is cut out on $\widetilde{C}_\h{E}$ by 
\begin{align*}|K_{S_{s+1}}&+\widetilde{C}_\h{E}-(2l-e_1-e_2)|=|(5;3, 1^s)-(2l-e_1-e_2)|\\&=|(5;3, 1^s)-(2;1^2,0^{s-1})|=
|(3;2,0,1^{s-1})|
\end{align*}
which is not very ample on $S_{s+1}$; cf. \cite{sandra}. 
Moreover, $(3;2,0,1^{s-1})_{|\widetilde{C}_\h{E}}$ is not very ample; note that 
$$\widetilde{C}_\h{E}\cdot e_2=(8;4,2^s)\cdot e_2=2.$$
We also note that $(3;2,0,1^{s-1})\cdot e_2=0$ and hence $e_2$ is contracted by the morphism induced by the linear system $(3;2,0,1^{s-1})$ by which $\h{E}^\vee$ on $\widetilde{C}_\h{E}$ is cut out. Therefore it follows that
$$|K_{\widetilde{C}_\h{E}}-\h{E}|=(3;2,0,1^{s-1})_{|\widetilde{C}_\h{E}}$$ is not very ample.
This shows that  $$\h{G}_{\h{N},g}\nsubseteq\h{G}'$$
 and hence 
$$\h{G}'=
\h{G}_{\Sigma_{\h{M},g}},$$
from which the irreducibility of $\widetilde{\h{G}}_\h{L}$ and $\HL{2r+4,r+8,r}$ for $r=5,6$ follows. \end{proof}

\section{Irreducibility of $\HL{2r+4,r+8, r}$ for $r=3, 4$.}
In this section, we continue to prove the irreducibility of the peculiar Hilbert schemes $\HL{2r+4,r+8,r}$ for $r=3, 4$, whose proofs are lengthier and utilizing somewhat different methods from the cases $r\ge 5$. In the course of the proof of Theorem \ref{r=4},
we also prove  the reducibility of $\HL{g-2,g,3}$ for $g=12$.
\begin{thm} \label{r=4}$\HL{12,12,4}$ is irreducible.
\end{thm}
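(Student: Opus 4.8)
The plan is to run the residual-series scheme (items (i)--(vi) of the Notations and Remark preceding Theorem \ref{irreducible}) for the boundary value $g=r+8=12$, $r=4$, where the residual $\h{E}=\h{D}^\vee$ of a general $\h{D}\in\h{G}\subset\widetilde{\h{G}}_\h{L}\subset\h{G}^4_{12}$ is a $g^3_{10}$. Exactly as in Theorem \ref{irreducible}, Lemma \ref{easylemma} shows that $\h{E}$ is birationally very ample, so it induces a degree-$10$ map onto a curve $C_\h{E}\subset\PP^3$ of geometric genus $12$. The essential new feature is that $g=12=\pi_1(10,3)$, so I can no longer conclude that $C_\h{E}$ lies on a quadric, nor that $\h{E}$ is base-point-free; these two failures are precisely the phenomena that make this case delicate.

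First I would dispose of the curves lying on a quadric $X$ (a quadric cone specializing to a smooth one as before). When $\h{E}$ is base-point-free one has $C_\h{E}\in(c,d)$ with $c+d=10$ and $(c-1)(d-1)=p_a\ge 12$, leaving only $(5,5),(4,6),(3,7)$ with $\delta=4,3,0$ nodes respectively. The computations already carried out in the proof of Theorem \ref{irreducible} (performed there for $t\le 4$ and $s\le 3$ precisely to cover the present case) show that the $(5,5)$-series with four nodes has very ample residual, yielding the Del Pezzo family $(8;3^2,2^3)$ of Table \ref{gg4} and the inclusion $\h{G}_{\Sigma_{(5,5),12}}\subset\h{G}'$, whereas the $(4,6)$-series with three nodes has non-very-ample residual. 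For the genuinely new $(3,7)$-case ($\delta=0$, so $C_\h{E}$ is smooth) I would repeat the adjunction computation: the residual of $\h{E}=|(1,1)|$ is cut out by $|K_X+C_\h{E}-(1,1)|=|(0,4)|$, whose restriction to $C_\h{E}$ is composed with the trigonal pencil $(0,1)|_{C_\h{E}}=g^1_3$ and hence not very ample. Thus among quadric curves only $\h{G}_{\Sigma_{(5,5),12}}$ survives, and it is irreducible by Remark \ref{Severi}(ii).

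The heart of the proof, and the step I expect to be the main obstacle, is to rule out the curves \emph{not} lying on a quadric. Since $p_a(C_\h{E})\ge 12=\pi_1(10,3)$, such a $C_\h{E}$ is forced to be a smooth extremal-for-$\pi_1$ curve of degree $10$ and genus $12$ lying on a cubic surface; here $\h{E}$ is very ample, so both $\h{E}$ and its residual are a priori candidates for very ampleness and a purely numerical argument does not suffice. The plan is to place $C_\h{E}$ on its cubic surface, use adjunction to identify the class of $\h{E}^\vee$, and then show that $\h{E}^\vee=g^4_{12}$ fails to separate some pair of points (equivalently, that it is compounded or that a secant/tangent condition drops), in the same spirit as the $(3,7)$-case. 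A parallel analysis is needed for the base-point case, where the base-point-free part of $\h{E}$ is a $g^3_9$ whose smooth image has maximal genus $\pi(9,3)=12$, forcing a smooth $(4,5)$-curve on a quadric; there I would verify that the resulting $g^4_{12}$ does not produce a new component but only a tetragonal specialization sitting in the closure of the pentagonal Del Pezzo family $\h{G}_{\Sigma_{(5,5),12}}$.

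Granting these exclusions one obtains $\h{G}'=\h{G}_{\Sigma_{(5,5),12}}$, which is irreducible; since $\widetilde{\h{G}}_\h{L}\stackrel{bir}{\cong}\h{G}'$, the irreducibility of $\HL{12,12,4}$ follows. As a byproduct of the same classification of $g^3_{10}$'s on genus-$12$ curves, the \emph{very ample} ones (rather than merely birationally very ample) split into the trigonal $(3,7)$-quadric curves and the extremal-for-$\pi_1$ curves on a cubic surface, two genuinely distinct families of smooth space curves; this exhibits $\HL{10,12,3}$ as reducible, as announced. The crux throughout is that exactly the two degeneracies peculiar to $g=\pi_1(10,3)$ --- curves off the quadric and base points of $\h{E}$ --- must be shown not to enlarge $\h{G}'$ beyond the single Severi component $\h{G}_{\Sigma_{(5,5),12}}$.
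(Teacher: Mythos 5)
Your outline reproduces the paper's strategy for Theorem \ref{r=4} step for step --- residual $g^3_{10}$, the quadric cases $(5,5)$, $(4,6)$, $(3,7)$, $(4,5)$ handled via the computations of Theorem \ref{irreducible} and adjunction, then exclusion of curves off the quadric --- and your identification of the two degeneracies peculiar to $g=\pi_1(10,3)$ (curves not on a quadric, and base points of $\h{E}$) is exactly where the difficulty lies. But the two steps you defer are precisely where the proof has real content, and as written they are gaps. For the cubic-surface case you only promise to ``identify the class of $\h{E}^\vee$ and show it fails to separate some pair of points.'' The paper carries this out: writing $C_\h{E}\sim al-\sum_{i=1}^{6}b_ie_i$ on a smooth cubic $S_6$, the relations $3a-\sum b_i=10$ and $a^2-\sum b_i^2=32$ together with Schwarz's inequality force $9\le a\le 11$ and exactly three classes $(9;3^5,2)$, $(10;4^2,3^4)$, $(11;4^5,3)$; in each, $|2K_S+C_\h{E}|$ contracts a curve meeting $C_\h{E}$ in two points ($e_6$, $l-e_1-e_2$, resp.\ $2l-e_1-\cdots-e_5$), so the residual is not very ample. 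More seriously, you never mention singular cubic surfaces, which cannot be skipped: normal non-cones are absorbed by Brevik's specialization theorem; for a normal cone the Gruson--Peskine genus formula forces $C$ through the vertex, hence a triple cover of an elliptic curve, excluded by Mumford's bound $\dim W^3_{10}(C)\le 2$ and a dimension count against $\lambda(12,12,4)=25$; and non-normal cubics are excluded because the adjunction genus formula on $\FF_1$ or $\FF_3$ has no integer solution for $(d,g)=(10,12)$.

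Your treatment of the base-point case also points in the wrong direction. You propose to show the resulting $g^4_{12}$ is ``only a tetragonal specialization sitting in the closure'' of the $(5,5)$ family; in fact the residual series is simply not very ample, so these curves contribute nothing to $\h{G}'$ at all. The paper's argument: the moving part $\h{E}'=g^3_9$ maps $C$ to a smooth $(4,5)$ curve on $X$, the residual of $\h{E}'$ is cut by $|(1,2)|$, which embeds $X$ as a scroll in $\PP^5$, and $\h{E}^\vee=|K_C-\h{E}'-q|$ is the projection from the image of the base point $q$; since $q$ lies on a ruling $L=(0,1)$ with $(4,5)\cdot(0,1)=4$, one projects from a point of a $4$-secant line of the image curve, which creates a singularity. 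Without these two exclusions actually carried out, the equality $\h{G}'=\h{G}_{\Sigma_{(5,5),\delta=4}}$ --- and hence the irreducibility --- is not established. (Your byproduct claim on the reducibility of $\h{H}_{10,12,3}$, with the $(3,7)$ family and the cubic-surface family as the two components, does match the paper's Corollary 4.2, but it too rests on the missing cubic-surface analysis.)
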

\begin{proof} Let $\h{E}=\h{D}^\vee$ for a general element  $\h{D}\in\h{G}\subset\widetilde{\h{G}}_\h{L}$. Recall that $\h{E}$ is birationally very ample (possibly with non-empty base locus) and $C_\h{E}\subset\PP^3$ denotes the image curve induced by $\h{E}$. Since $g=12=\pi_1(10,3)$, $C_\h{E}$ does not necessarily lie  on a quadric surface in $\PP^3$.  $C_\h{E}$ may lie either on a quadric or on an irreducible surface of higher degree.  We need to consider these two cases separately.  

\vni
(i) Assume that  $C_\h{E}\subset\PP^3$ lies on a quadric. 
Again, by Remark \ref{Severi} (i) and \cite{Zeuthen} we may assume that $C_\h{E}$ lies on a non-singular quadric $X\subset\PP^3$ and has $\delta$ nodes as its only singularities. The possibilities are

(a) $C_\h{E}\in  |\h{O}_{\PP^1\times\PP^1}(5,5)|$ and $\delta =4$,

(b) $C_\h{E}\in  |\h{O}_{\PP^1\times\PP^1}(4,6)|$ and $\delta =3$,

(c) $C_\h{E}\in  |\h{O}_{\PP^1\times\PP^1}(3,7)|$ and $C_\h{E}$ is non-singular, 

(d) $C_\h{E}\in  |\h{O}_{\PP^1\times\PP^1}(4,5)|$, $\h{E}$ has non-empty base locus and $C_\h{E}$ is non-singular. Hence we have
$$\h{G}'\subset \h{G}_{\Sigma_{(5,5),g}}\cup \h{G}_{\Sigma_{(4,6),g}}\cup \h{G}_{\Sigma_{(3,7),g}}\cup \h{G}_{\Sigma_{(4,5),g}}.$$

\vni
\noindent We check if the reverse inclusion holds.

\vni
\noindent 
(a)  The case $C_\h{E}\in\h{M}= |\h{O}_{\PP^1\times\PP^1}(5,5)|, \delta =4$ was treated in the proof of Theorem \ref{irreducible}; we concluded that  $\widetilde{C}_\h{E}\in (8;3^2,2^3)$ for $\h{E}\in\h{G}_{\Sigma_{\h{M},\delta}}$. We also showed that  $\h{G}_{\Sigma_{\h{M},g}}\subset\h{G}'$ (or equivalently $(\h{G}_{\Sigma_{\h{M},g}})^\vee\subset\widetilde{\h{G}}_\h{L}$)
by showing that a general $\h{E}\in\h{G}_{\Sigma_{\h{M},g}}$ has very ample residual series.

\vni
\noindent (b) $C_\h{E}\in\h{N}= |\h{O}_{\PP^1\times\PP^1}(4,6)|$ and $\delta =3$; we showed in the proof of Theorem \ref{irreducible} that $\h{E}^\vee$ is not very ample for  a  general element  $\h{E}\in\h{G}_{\Sigma_{\h{N},g}}$  and hence $\h{G}_{\Sigma_{\h{N},g}}\nsubseteq\h{G}'$.

\vni
\noindent
(c) $C_\h{E}\in  |\h{O}_{\PP^1\times\PP^1}(3,7)|$ and $C_\h{E}$ is non-singular; $\h{E}^\vee$ is cut out by the series 
$$|K_X+C_\h{E}-(1,1)|=|(-2,-2)+(3,7)+(1,1)|=|(0,4)|$$ on $C_\h{E}$, which is not very ample and hence 
$\h{G}_{\Sigma_{(3,7),g}}\nsubseteq\h{G}'$.
However, it is worthwhile to remark that non-singular curves in $|\h{O}_{\PP^1\times\PP^1}(3,7)|$ on  smooth quadrics in $\PP^3$ form an irreducible  family $\h{I}_2\subset \h{H}^\h{L}_{10,12,3}$ of the expected dimension 
$$\dim\h{G}_{\Sigma_{(3,7),g}}+\dim\PP H^0(\PP^3,\h{O}_{\PP^3}(2))=40.$$ 

\noindent
(d) $C_\h{E}\in  |\h{O}_{\PP^1\times\PP^1}(4,5)|$, $\h{E}$ has non-empty base locus and $C_\h{E}$ is non-singular:  Let $\h{E}'=|\h{E}-q|$ be the moving part $\h{E}$, where $q$ is the base point of $\h{E}$.
We argue that $\h{E}^\vee$ is not very ample as follows.
Note that the residual series of $\h{E}'$ is cut out on the non-singular curve $C_{\h{E}'}=C_{\h{E}}$  by the very ample linear system 
$$|K_X+C_{\h{E}'}-(1,1)|=|(-2,-2)+(4,5)-(1,1)|=|(1,2)|$$
which induces an embedding $X\stackrel{\iota}{\longrightarrow}\PP^1\times\PP^2\subset\PP^5$. Under this embedding $\iota$,
the non-singular curve $C_{\h{E}'}$ has the image $\iota (C_{\h{E}'})\subset\PP^5$ which is a curve of degree $2g-2-\deg\h{E}'=13$. Note that  $$|K_{C_\h{E'}}-\h{E}'|=|K_{C_\h{E'}}-\h{E}+q|,$$ 
and $\h{E}^\vee=|K_{C_{\h{E'}}}-\h{E}|$ gives rise to a projection of  $\iota (C_{\h{E}'})$ into $\PP^4$ with center at $\iota (q)\in \iota (C_{\h{E}'})\subset\PP^1\times\PP^2\subset\PP^5$. Let $L$ be one of the rulings of $X$ such that $q\in L$, say $L=(0,1)\subset X$. We note that the image  of the line $L$ under $\iota$ is still a line on the image scroll $\iota (X) \subset\PP^5$; $(1,2)\cdot (0,1)=1$. We also note that $\iota (L)$ is a $4$-secant line of the non-singular curve $\iota (C_{\h{E}'})$ containing $\iota (q)$ since  $(4,5)\cdot (0,1)=4$.
However,  a projection from the point $\iota (q)$ on the $4$-secant line $\iota (L)$ of  $\iota (C_{\h{E}'})$ produces a singularity and hence $\h{E}^\vee$ is not very ample. Therefore $\h{G}_{\Sigma_{\h{L},g}}\nsubseteq\h{G}'$ for $\h{L}= |\h{O}_{\PP^1\times\PP^1}(4,5)|$. 
\vni
(ii) We assume $C_{\h{E}}$ does not lie on a quadric. Since $g=12\le p_a(C_\h{E})\le \pi_1(10,3)=12$ (cf. \cite[Theorem 3.13]{H1}), we see that $C_\h{E}$ is non-singular. By the exact sequence
$$0\rightarrow H^0(\PP^3, \h{I}_{C_\h{E}}(3)) \rightarrow H^0(\PP^3,\h{O}_{\PP^3}(3))\rightarrow H^0(C_{\h{E}},\h{O}_{C_{\h{E}}}(3)) $$
one checks that $C_\h{E}$ lies on an (unique) irreducible cubic surface. 

\vni
\noindent
(ii-a) We assume that $C_\h{E}$ lies on a  smooth cubic surface $S=S_6\subset\PP^3$.
We compute the class 
of $C_\h{E}$ as follows. 
Setting $C_\h{E}\sim al-\sum^6_{i=1}b_ie_i$, we have $$\deg C_\h{E} =3a-\sum b_i=10, C_\h{E}^2=a^2-\sum b_i^2=2g-2-K_S\cdot C_\h{E}=22+\deg C_\h{E}=32.$$
By Schwartz's inequality,  one has $$(\sum b_i)^2\le 6(\sum b_i^2)$$ and substituting 
$\sum b_i=3a-10$, $\sum b_i^2=a^2-32$ we obtain $$3a^2-60a+292\le 0,$$ implying 
$9\le a\le 11$. Hence, we arrive at the following possibilities for 
the $7$-tuple $(a; b_1, \cdots , b_6)$;
\begin{equation}\label{cubic}
\textrm{ (i) } (9; 3^5,2) ~~~~\quad \textrm{ (ii) }  (10; 4^2,3^4) ~~~~ \quad\textrm{ (iii) } (11; 4^5,3)
\end{equation}

\noindent and by a simple numerical check, in all the three cases the curve $C_\h{E}$ is linearly equivalent on $S$ to $D+3H$ where $D$ is one of the $27$ lines on the cubic.

\noindent
For each of the above three types of $7$-tuples $(a; b_1, \cdots , b_6)$, we set $L=\mathcal{O}_S(a;b_1,\cdots ,b_6)$. By Riemann-Roch on $S$ we have
\begin{eqnarray*}
h^0(S,L)&=&h^1(S,L)-h^2(S,L)+\chi(S)+\frac{1}{2}(L^2-L\cdot\omega_S)\\
&=&h^1(S,L)-h^2(S,L)+1+\frac{1}{2}(C_\h{E}^2+\deg C_\h{E})\\
&=&h^1(S,L)-h^2(S,L)+1+\frac{1}{2}(32+10)
\end{eqnarray*}
By Serre duality  we have, $$h^1(S,L)=h^1(S, \omega_S\otimes L^{-1})=h^1(S,\mathcal{O}_S(-(a+3)l+\sum(b_i+1)e_i).$$
Since $E:=(a+3)l-\sum(b_i+1)e_i$ is (very) ample we have $$h^1(S,L)=h^1(S,\mathcal{O}_S(-(a+3)l+\sum(b_i+1)e_i))=0$$ by Kodaira's vanishing theorem. 
We further note the divisor $-E$ is not linearly equivalent to an effective divisor; if it were, one would have $-E\cdot l=-(a+3)\ge 0$ whereas $l^2=1\ge 0$, a contradiction. Hence it follows that  $$h^2(S,L)=h^0(S,\omega_S\otimes L^{-1})=h^0(S,\mathcal{O}_S(-E)=0$$ and we obtain $$h^0(S,L)=22.$$ Therefore the sublocus $\mathcal{I}_3$ of $\mathcal{H}_{10,12,3}$ consisting of  curves lying on a smooth cubic has dimension equal to the minimal possible dimension of a component of the Hilbert scheme $\mathcal{H}_{10,12,3}$, i.e.
$$\dim\h{I}_3=\dim H^0(\mathbb{P}^3, \mathcal{O}(3))-1+\dim |\mathcal{O}_S(a;b_1,\cdots ,b_6)|=40=4\cdot 10$$ 
We also note that $\h{I}_3$ is not in the closure $\overline{\mathcal{I}_2}$ or vice versa since $\dim\h{I}_2=\dim\h{I}_3$. 
So far we have shown that $\HL{10,12,3}$ has two irreducible families $\overline{\h{I}_2}$ and $\overline{\h{I}_3}$ of the 
same dimension $4\cdot 10$.

\vni
\noindent
(ii-b) We assume that $C_\h{E}$ lies on a singular cubic surface. We now argue that a family of non-singular curves lying on a singular cubic surface does not constitute a component of $\h{H}_{10,12,3}$. Note that every singular cubic surface $S\subset \PP^3$ is one of the following three types.

(1) $S$ is a normal cubic surface with some double points only.

(2) $S$ is a normal cubic cone.

(3) $S$ is not normal, which may possibly be a cone. 

\noindent
For the case (1), let $S$ be a normal cubic surface which is not a cone. By a work due to John Brevik \cite[Theorem 5.24]{Brevik}, every curve on $S$ is a specialization of curves on a smooth cubic surface. Therefore we are done for the case (1). 

\noindent
For the case (2), we let $C$ be a smooth curve of degree $d$ and genus $g$ on a normal cubic cone $S$. We recall by \cite[Proposition 2.12]{Gruson}  that 

(2-a) $g=1+d(d-3)/6-2/3$ if $C$ passes through the vertex of $S$ 

(2-b) $g=1+d(d-3)/6$, otherwise.

\noindent
Note that  $(d,g)=(10,12)$ satisfies only (a) and $C$ passes through the vertex $q$ of the normal cone $S$. Hence $C$ is a triple covering of an elliptic curve through the projection with center at the vertex $q$. The following naive dimension count shows that 
there is no component of $\HL{10,12,3}$ (or $\HL{12,12,4})$ whose general element corresponds to a triple covering of an elliptic curve. Assume the existence of such a component $\h{G}^\vee \subset\h{G}^3_{10}$. By Castelnuovo-Severi inequality, a triple covering $C\stackrel{\tau}{\rightarrow} E$ of an elliptic curve of genus $g=12$ is not trigonal, bielliptic or a smooth plane quintic. Hence by a theorem of Mumford \cite[Theorem 5.2, page 193]{ACGH}, one has 
$$\dim W^3_{10}(C)\le 10-2\cdot 3-2$$
and therefore $$\dim\h{G}^\vee\le\dim  W^3_{10}(C)+\dim\h{X}_{3,1}=2g\lneq \lambda (10,12,3)=\lambda(12,12,4),$$
leading to a contradiction. Here $\mathcal{X}_{n,h}$ denotes the locus in $\mathcal{M}_g$ corresponding to curves
which are n-fold coverings of smooth curves of genus $h$, which has pure dimension $$2g+(2n-3)(1-h)-2;$$
cf. \cite[Theorem 8.23, p.828]{ACGH2}. Therefore we are done for the case (2).

\noindent
For the case (3), we let $C=C_\h{E}$ be a smooth curve of degree $d$ and genus $g$ on a non-normal cubic surface $S$. Recall that if $S$ is a cone, then $S$ is a cone over a singular plane cubic, in which case $S$ is a projection of a cone $S'$ over a twisted cubic in a hyperplane in  $\PP^4$ from a point not on $S'$. Furthermore,  the minimal desingularisation $\tilde S$ of $S'$ is isomorphic to the ruled surface 
$$\mathbb{F}_3=\PP (\mathcal{O}_{\PP^1}\oplus \mathcal{O}_{\PP^1}(3)),$$ 
which is the blow-up of the cone $S'$ at the vertex.
If $S$ is not a cone, then $S$ is a projection of a rational normal scroll $$S''\cong\tilde S\cong\mathbb{F}_1=\PP (\mathcal{O}_{\PP^1}(1)\oplus \mathcal{O}_{\PP^1}(2))\subset \PP^4$$  from a point not on $S''$. 
In both cases, we have $\text{Pic}~ \tilde S=\mathbb{Z}h\oplus \mathbb{Z}f \cong\mathbb{Z}^{\oplus 2}$, where $f$ is the class of a fiber of $\tilde S \rightarrow \PP^1$ and $h=\pi^*(\mathcal{O}_S(1))$ with $\tilde S\stackrel{\pi}{ \rightarrow} S$. Note that $h^2=3, f^2=0$, $h\cdot f=1$ and $K_{\tilde S}\equiv -2h+f$. Denoting by $\tilde C\subset\tilde S$ the proper transformation of the curve $C\subset S$, we let $k:=(\tilde C \cdot f)_{\tilde S}$ be the intersection number of $\tilde C$ and the fiber $f$ on $\tilde S$. We have 
$$\tilde C \equiv kh+(d-3k)f=kh+(10-3k)f.$$
By adjunction formula, it follows that
$$g=12=\frac{(2\cdot 10-3k-2)(k-1)}{2},$$ which does have an integer solution and we are done with the case (3).

\vni
\noindent Since we have exhausted all the possibilities, the reducibility of $\HL{10,12,3}=\h{H}_{10,12,3}$ which has two components $\overline{\h{I}_2}$ and $\overline{\h{I}_3}$ of the 
same minimal dimension readily follows.

\vni
\noindent To conclude the irreducibility of $\HL{12,12,4}$, it remains to show that the residual series of a very ample $\h{E}=g^3_{10}$
-- which induces an embedding into a smooth cubic surface  $S=S_6\subset\PP^3$ -- is not very ample.

\vni
\noindent Recall that the residual series of a complete hyperplane series of a general element $C_{\h{E}}\in\h{I}_3$ is cut out 
by the linear system 
$$|K_S+C_\h{E}-H|=|2K_S+C_\h{E}|,$$
i.e. $|2K_S+C_\h{E}|_{|C_\h{E}}=|K_{C_\h{E}}-\h{E}|$. We recall our previous computation (\ref{cubic}).

\noindent
(i) $C_\h{E}=(9,3^5,2)$: Note that 
$|2K_S+C_\h{E}|=(3,1^5,0)$ which is not very ample on $S_6$.   We further note that $C_\h{E}\cdot e_6=2$ whereas 
$(2K_S+C_\h{E})\cdot e_6=0$. Hence the restriction of the morphism on $S_6$ induced by  $|2K_S+C_\h{E}|=(3,1^5,0)$  -- which is the blowing down of $S_6$ along $e_6$ -- produces a singularity on the image curve in $S_5\subset\PP^4$. Hence $|K_{C_{\h{E}}}-\h{E}|$ is not very ample. 

\noindent
(ii) $C_\h{E}=(10;4^2,3^4)$: Note that 
$$|2K_S+C_\h{E}|=(4;2^2,1^4),$$ and  $(4;2^2,1^4)\cdot (l-e_1-e_2)=0$ whereas $C_\h{E}\cdot (l-e_1-e_2)=2$. Hence
the restriction on $C_\h{E}$ of the morphism induced by $|2K_S+C_\h{E}|$ -- which contracts $(l-e_1-e_2)$  to a point -- produces a singularity.

\noindent
(iii) $C_\h{E}\in (11; 4^5,3)$: In this case, 
 $$|2K_S+C_\h{E}|=(5;2^5,1),$$
and $(5;2^5,1)\cdot (2l-e_1-e_2-e_3-e_4-e_5)=0$ whereas $C_\h{E}\cdot (2l-e_1-e_2-e_3-e_4-e_5)=2$. Hence $|K_{C_{\h{E}}}-\h{E}|$ is not very ample by the same reason.

\vni
\noindent  Finally, we may conclude that $$\h{G}'=\h{G}_{\Sigma_{(5,5),\delta =4}},$$
therefore it follows that 
$\widetilde{\h{G}}_\h{L}$ and $\HL{12,12,4}$ are irreducible.
\end{proof}

\noindent
We quote the reducibility of $\h{H}_{10,12,3}=\h{H}^\h{L}_{g-2,g,3}$ which we showed in the proof  
of the above theorem as a corollary.

\vni
\begin{cor} $\h{H}_{10,12,3}=\h{H}^\h{L}_{g-2,g,3}$ is reducible consisting of two components. 
\end{cor}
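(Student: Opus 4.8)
The plan is to read the corollary off the geometric classification already carried out inside the proof of Theorem \ref{r=4}, where both the two relevant families and the exhaustiveness of the case analysis were established; what remains is merely to package those facts and to verify the set-theoretic equality $\h{H}_{10,12,3}=\HL{10,12,3}$. I would present it as a short assembly argument rather than a fresh computation.

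First I would dispose of the equality $\h{H}_{10,12,3}=\HL{10,12,3}$, which is the one ingredient not literally contained in Theorem \ref{r=4}. A non-linearly-normal member of $\h{H}_{10,12,3}$ would be an isomorphic projection of a smooth, non-degenerate curve of degree $10$ and genus $12$ in $\PP^4$. But the Castelnuovo bound gives $\pi(10,4)=9<12$, so no such curve in $\PP^4$ exists. Hence every smooth, irreducible, non-degenerate curve of degree $10$ and genus $12$ in $\PP^3$ is linearly normal, and the two Hilbert schemes coincide.

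Next I would recall the two irreducible families produced in the proof of Theorem \ref{r=4}. The first, $\overline{\h{I}_2}$, has as general member a smooth curve of type $(3,7)$ on a smooth quadric $X\subset\PP^3$; the second, $\overline{\h{I}_3}$, has general member lying on a smooth cubic surface, with class one of the three $7$-tuples listed in \eqref{cubic}, each linearly equivalent to $D+3H$. Both were shown there to have dimension $40=4\cdot 10=\h{X}(10,12,3)$, the minimal possible dimension of a component of $\h{H}_{10,12,3}$, so each is a genuine irreducible component. Since they have equal dimension, neither can be contained in the closure of the other, and their general members are distinguished by lying on a quadric versus a cubic (but not the other) surface; hence $\overline{\h{I}_2}$ and $\overline{\h{I}_3}$ are two distinct components.

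Finally I would argue exhaustiveness, again invoking Theorem \ref{r=4}. A smooth curve $C$ of degree $10$ and genus $12$ in $\PP^3$ either lies on a quadric or it does not. If it does, then after specializing a quadric cone to a smooth quadric via \cite{Zeuthen, Brevik} it sits on a smooth quadric as a curve of type $(c,d)$ with $c+d=10$ and $(c-1)(d-1)=12$, forcing $(c,d)=(3,7)$ and membership in $\overline{\h{I}_2}$. If it does not lie on a quadric, then $g=12=\pi_1(10,3)$ shows $C$ is non-singular and, by the cohomology sequence used in Theorem \ref{r=4}, lies on a unique irreducible cubic surface; on a smooth cubic one obtains $\overline{\h{I}_3}$, while the three types of singular cubic surface were excluded in cases (1)--(3) there. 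This yields $\h{H}_{10,12,3}=\overline{\h{I}_2}\cup\overline{\h{I}_3}$ with exactly two components. The main obstacle has in fact already been overcome inside Theorem \ref{r=4}, namely the exclusion of curves on singular cubic surfaces (especially the normal-cone and non-normal cases, handled via the Castelnuovo--Severi inequality and an adjunction count on $\mathbb{F}_1,\mathbb{F}_3$); the present corollary is essentially bookkeeping, the only new input being the Castelnuovo count $\pi(10,4)=9$ guaranteeing linear normality.
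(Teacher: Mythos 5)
Your proposal is correct and follows essentially the paper's own route: the paper proves this corollary simply by pointing back to the proof of Theorem \ref{r=4}, where the two irreducible families $\overline{\h{I}_2}$ (smooth curves of type $(3,7)$ on quadrics) and $\overline{\h{I}_3}$ (curves on smooth cubics), each of the minimal dimension $40$, and the exhaustive exclusion of all other cases (nodal types on quadrics, singular cubic surfaces) were already established. Your only genuine addition is the explicit Castelnuovo check $\pi(10,4)=9<12$ justifying the identification $\h{H}_{10,12,3}=\HL{10,12,3}$, which the paper asserts without comment; that check is correct.
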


\noindent
We go down to the case $r=3$ and show the irreducibility of $\HL{10,11,3}$.
\begin{thm}\label{linkage} For $r=3$, $d=g+r-4$ and $g=r+8$, $$\h{H}_{d,g,3}=\h{H}^\h{L}_{d,g,3}=\HL{10,11,3}$$ is irreducible of the expected dimension. 
\end{thm}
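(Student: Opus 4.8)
The plan is to prove irreducibility of $\HL{10,11,3}$ by analyzing where the image curve $C_\h{E}\subset\PP^3$ induced by $\h{E}=\h{D}^\vee=g^3_{10}$ lives, exactly as in the previous two theorems, but now with the extra structure coming from $g=11$ being strictly below both Castelnuovo bounds. First I would record the numerics: for a general $\h{D}\in\h{G}\subset\widetilde{\h{G}}_\h{L}\subset\h{G}^3_{10}$ the residual series is $\h{E}=g^3_{10}$ with $e=2g-2-d=10$, and since $g=11<\pi_1(10,3)=12$, the curve $C_\h{E}$ need not lie on a quadric; as in Theorem \ref{r=4} it lies either on a quadric or on a unique irreducible cubic surface. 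Because $r=3$ the hyperplane series $\h{D}$ is itself a $g^3_{10}$, so $\h{D}$ and $\h{E}$ play symmetric roles and the linkage between the two families (hence the name of the theorem) becomes the organizing principle: $C_\h{E}$ and the original curve $C_\h{D}$ are residual to each other in a complete intersection of the two cubics (or a quadric and a cubic), and I expect the irreducibility to follow from showing that \emph{all} admissible configurations degenerate into a single irreducible family.

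The key steps I would carry out, in order, are as follows. First, show $\h{E}$ is birationally very ample (via Lemma \ref{easylemma}, since $g=11>\pi(10,3)$ would force a compounded series whose residual is not very ample --- or, more carefully, rule out the compounded cases directly as in Theorem \ref{main}(c)). Second, enumerate the surfaces through $C_\h{E}$: on a smooth quadric $X$ the base-point-free case forces $C_\h{E}\in(c,d)$ with $c+d=10$ and $p_a\ge 11$, giving $(5,5)$ with $\delta=5$, $(4,6)$ with $\delta=4$, and $(3,7)$ with $\delta=3$; the base-point case gives $(4,5)$ with $C_\h{E}$ smooth. Third, for each quadric case compute the residual series by blowing up the nodes and intersecting with $K_{S_{t+1}}+\widetilde C_\h{E}-(2l-e_1-e_2)$, exactly reusing the computations from Theorem \ref{irreducible} and Theorem \ref{r=4}: I expect $(5,5)$ to yield a very ample residual (the surviving family) while $(4,6)$, $(3,7)$, and $(4,5)$ yield residual series that contract an exceptional curve or a ruling and are therefore not very ample, hence do not lie in $\h{G}'$. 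Fourth, handle the cubic-surface case: on a smooth cubic $S_6$ solve $\deg C_\h{E}=10$, $C_\h{E}^2=2g-2-K_S\cdot C_\h{E}$ by Schwartz's inequality to list the finitely many classes, and then show $|2K_S+C_\h{E}|$ restricted to $C_\h{E}$ is never very ample (it contracts an $e_i$ or a conic as in Theorem \ref{r=4}), so these curves also fail to contribute; the singular-cubic subcases are dispatched by the same three-part Brevik/Gruson/Castelnuovo-Severi argument used before, with the genus equation $g=\frac{(2d-3k-2)(k-1)}{2}$ checked to admit no relevant integer solution for $(d,g)=(10,11)$.

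Having eliminated every configuration except $C_\h{E}\in(5,5)$ with the appropriate $\delta$ on a smooth quadric, I would conclude $\h{G}'=\h{G}_{\Sigma_{(5,5),g}}$, which is irreducible by Remark \ref{Severi}(ii) and the $\textrm{Aut}(X)$-bundle structure of Notations and Remark \ref{facts}; irreducibility of $\widetilde{\h{G}}_\h{L}\stackrel{bir}{\cong}\h{G}'$ and hence of $\HL{10,11,3}$ then follows by the scheme spelled out in that Notations and Remark. The dimension count verifying that this is the \emph{expected} dimension would come from $\dim\h{G}_{\Sigma_{(5,5),g}}=\dim|(5,5)|-\delta-\dim\textrm{Aut}(X)$ compared against $\lambda(10,11,3)$.

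The main obstacle I anticipate is the cubic-surface case, and specifically making rigorous that curves on a smooth cubic genuinely fail to give very ample residual series across \emph{all} the numerically admissible classes from the Schwartz inequality --- this is where the $r=3$ situation is more delicate than $r\ge 5$, because unlike the higher-$r$ theorems where everything lived on a quadric, here genuine cubic-surface families exist and compete. A secondary subtlety is that because $r=3$ makes $\h{D}$ and $\h{E}$ both $g^3_{10}$, one must be careful that the residual/linkage correspondence is actually birational and does not collapse distinct families or miss the base-point-free versus base-point distinction; verifying $\h{E}$ is base-point-free (or correctly handling the single base point) at genus $11$ rather than assuming it will require the explicit argument that $g=11>\pi(9,3)$ forces base-point-freeness, mirroring the $g\ge 13$ remark in Theorem \ref{irreducible}.
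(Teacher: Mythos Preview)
Your approach has a fundamental gap: the framework of ``quadric case versus cubic case'' for $C_{\h{E}}$ misses the generic case entirely. For $r=3$ the original curve $C=C_{\h{D}}\subset\PP^3$ is itself smooth of degree $10$ and genus $11$, and (as the paper observes first) there is no integer solution to $a+b=10$, $(a-1)(b-1)=11$, so $C$ lies on \emph{no} quadric. Moreover the family of such curves on a smooth cubic has dimension $d+g+18=39<40=4d$ (Gruson--Peskine), so the generic $C$ lies on no cubic either. By the self-residual symmetry you correctly note, the same is true of $C_{\h{E}}$ when $\h{E}$ is very ample; your case split simply has no bucket for this.

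Concretely, your ``surviving'' family has $\dim\h{G}_{\Sigma_{(5,5),11}}=\dim|(5,5)|-\delta-\dim\textrm{Aut}(X)=35-5-6=24$, while $\lambda(10,11,3)=25$: the $(5,5)$ family is too small to be a component. Indeed those curves are exactly $C\in(8;3^2,2^4)$ on a cubic Del Pezzo, which Table~\ref{3r8} flags as \emph{not} constituting a component. Your elimination of the cubic-surface case would also fail: here $g=11$ gives different classes from the $g=12$ list in Theorem~\ref{r=4}, and some of them do have very ample residual (e.g.\ $(8;3^2,2^4)$ has $|2K_S+C_{\h{E}}|=(2;1^2,0^4)$, and $\h{E}^\vee$ cut by $(3;1^6)$ is very ample). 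Finally, your base-point-freeness check is wrong: $\pi(9,3)=12>11$, so $g>\pi(9,3)$ fails.

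The paper's proof uses a genuinely different mechanism: linkage through \emph{quartic} surfaces. Since $h^0(\PP^3,\h{O}(4))-h^0(C,\h{O}(4))=5$, every such $C$ is residual in a complete intersection of two quartics to a curve $D$ of degree $6$ and genus $3$; an incidence correspondence in $\mathbb{G}(1,\PP H^0(\PP^3,\h{O}(4)))$ together with the known irreducibility of $\h{H}_{6,3,3}$ then yields irreducibility and the expected dimension $4\cdot 10$. This is the ``linkage'' of the theorem's label, not the residual-series duality you invoke.
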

\begin{proof} 
We first note that a smooth curve $C\subset \PP^3$ of degree $d=10$ and genus $g=11$ does not lie on a quadric surface just because 
there is no integer solution to the equations
 $$a+b=10=d, ~(a-1)(b-1)=11=g.$$
 We remark that the existence of $\h{H}_{10,11,3}$ is clear by the existence of a non-singular curve in the very ample linear system $(8;3^2,2^4)$ on a smooth cubic surface.
\noindent
\vni
Since $\deg C=10$, $C$ lies on at most one cubic surface $S$. Assume $S$ is a smooth cubic surface. We recall that 
smooth space curves of degree $d$ and genus $g$ on a smooth cubic surface form a finite union of locally closed irreducible family in $\h{H}_{d,g,3}$ of dimension $d+g+18$ if $d\ge 10$ by \cite[Proposition B.1]{Gruson}. Since $d+g+18< 4d$ for $(d,g)=(10,11)$, it follows that the family $\h{I}_3$ consisting of curves lying on a smooth cubic does not constitute a component of $\h{H}_{10,11,3}$.
If $C$ lies on a singular cubic surface, we may use the same argument as in the proof (ii-b) of Theorem \ref{r=4} to conclude that such  case does not occur.
\vni
Let $C$ corresponds to a general element of a component of $\h{H}_{10,11,3}$. By the above discussion we may assume that  $C$ does not lie on a quadric or a cubic.
Note that every smooth curve $C$ of genus $g=11$ of degree $d=10$ in $\PP^3$ lies on at least $$h^0(\PP^3, \h{O}_{\PP^3}(4))-h^0(C,\h{O}_C(4))=35-(40-11+1)=5$$ independent quartics. Hence $C$ is 
is residual to a curve $D\subset\PP^3$ of degree $e=6$ and genus $h=3$ in a complete intersection of two (irreducible) quartics
by the following well-known formula relating degrees $d, e$ and genus $g, h$ of two directly linked curves in $\PP^3$ in a complete intersection of surfaces of degrees $s$ and $t$;
\begin{equation}\label{dl}
2(g-h)=(s+t-4)(d-e).
\end{equation}
Consider the locus
$$\Sigma\subset\mathbb{G}(1,\PP(H^0(\PP^3,\h{O}_{\PP^3}(4))))=\mathbb{G}(1,34)$$
of pencils of quartic surfaces whose base locus consists of a curve $C$ of degree $d=10$ and genus $g=11$ and a sextic $D$ of genus $h=3$ which are directly linked via a complete intersection of quartics, together with two obvious maps 
\[
\begin{array}{ccc}
\hskip -48pt\mathbb{G}(1,34)\supset\Sigma&\stackrel{\pi_C}\dashrightarrow
~~\h{I}_4\subset\h{H}_{10,11,3}\\
\\
\dashdownarrow\vcenter{\rlap{$\scriptstyle{{\pi_D}}\,$}}
\\ 
\\
\h{H}_{6,3,3}
\end{array}
\]
where $\h{I}_4$ is the image of $\Sigma$ under $\pi_C$.
A sextic  $D\subset\PP^3$ of genus $h=3$ lies on at least $$h^0(\PP^3,\h{O}_{\PP^3}(4))-h^0(D,\h{O}_D(4))=35-(24-3+1)=13$$ independent quartics. 
Note that $C\in\h{H}_{10.11.3}$ is directly linked to $D\in\h{H}_{6,3,3}$ which in turn is directly linked to a twisted cubic $F$ via complete intersection of two cubics. From the basic  relation
$$\dim H^1(\PP^3,\h{I}_C(m))=\dim H^1(\PP^3,\h{I}_D(s+t-4-m))$$
where $C$ and $D$ are directly linked via complete intersection of surfaces of degrees $s$ and $t$, we have 
\begin{equation}\label{h1}h^1(\PP^3,\h{I}_C(4))=h^1(\PP^3,\h{I}_D(0))=h^1(\PP^3,\h{I}_F(3+3-4-0))=0
\end{equation}
and 
$$h^1(\PP^3,\h{I}_D(4))=h^1(\PP^3,\h{I}_F(3+3-4-4))=0.$$

\noindent
 Therefore $\pi_D$ is generically surjective with fibers open subsets of $\mathbb{G}(1,12)$.  Since $\h{H}_{6.3.3}$ is known to be irreducible (cf. \cite{E1} ), it follows that  $\Sigma$ is irreducible and $$\dim\Sigma=\dim\mathbb{G}(1,12)+\dim\h{H}_{6.3.3}=22+4\cdot 6=46.$$ On the other hand, since every smooth curve $C\in\h{I}_4$ of degree $d=10$ and $g=11$ lies on exactly $5$ independent quartics by (\ref{h1}), $\pi_C$ is generically surjective with fibers open in $\mathbb{G}(1,4)$. Finally it follows that $\overline{\h{I}_4}=\h{H}_{10.11.3}$ -- which is the image of (irreducible) $\Sigma$ under $\pi_C$ --  is irreducible of dimension
$$\dim\Sigma-\dim\mathbb{G}(1,4)=46-6=4\cdot10.$$
\end{proof}

\vni
\noindent The following table is a summary of our discussion in sections 3 and 4 regarding $\HL{2r+4,r+8,r}$. $C_\h{E}$ denotes possibly singular curve in $\PP^3$ determined by the residual series $\h{E}$ of the hyperplane series of  $[C]\in \h{H}^\h{L}_{2r+4,r+8,r}$.
Note that the irreducibility of $\h{H}^\h{L}_{2r+4,r+8,r}$ has been fully worked out  in Theorems \ref{irreducible}, \ref{r=4} \ref{linkage} and all curves described in the table form a dense open set in a component of $\h{H}^\h{L}_{g+r-4,g,r}$, except $r=3$ and $C\in (8;3^2, 2^4)$, which form an irreducible family of dimension $39< 40.$  

\newpage
\begin{table}[ht]
\caption{$\h{H}^\h{L}_{g+r-4,g,r}$=$\h{H}^\h{L}_{2r+4,r+8,r}\neq\emptyset$ for $3\le r\le 8$} 
\centering 
\begin{tabular}{c c c c} 
\hline\hline 
{\footnotesize $(d,g,r)$} & {\footnotesize Description of $C\in \h{H}^\h{L}_{g+r-4,g,r}$}  & {\footnotesize $C_\h{E}$} & {\footnotesize Irreducibility} \\ [0.5ex]
\hline 
\hline 
{\footnotesize$(10,11,3)$ } & {\footnotesize General element is directly linked to}& {\footnotesize }&{\footnotesize Yes} \\
{\footnotesize } & {\footnotesize $\widetilde{C}\in\HL{6,3,3}$ in a complete intersection}& {\footnotesize } \\
{\footnotesize } & {\footnotesize of two quartics.}& {\footnotesize}& {\footnotesize} \\
{\footnotesize } & {\footnotesize $C\in(8;3^2,2^4)$ on a Del Pezzo in $\PP^3$ }  & {\footnotesize  $C_\h{E}\in\Sigma_{|(5,5)|,5}$} & {\footnotesize} \\
{\footnotesize } & {\footnotesize which does {\bf not} constitute a component. }& {\footnotesize}& {\footnotesize} \\[0.5ex]
{\footnotesize $(12,12,4)$} & {\footnotesize $C\in(8;3^2,2^3)$ on a Del Pezzo in $\PP^4$ } & {\footnotesize $C_\h{E}\in\Sigma_{|(5,5)|,4}$} &{\footnotesize Yes}  \\ [0.5ex]
{\footnotesize $(14,13,5)$} & {\footnotesize $C\in (8;3^2,2^2)$ on a Del Pezzo in $\PP^5$ } & {\footnotesize$C_\h{E}\in\Sigma_{|(5,5)|,3}$} &{\footnotesize Yes}  \\[0.5ex]
{\footnotesize $(16,14,6)$} & {\footnotesize  $C\in (8;3^2,2)$ on Del-Pezzo in $\PP^6$}& {\footnotesize $C_\h{E}\in\Sigma_{|(5,5)|,2}$} &{\footnotesize Yes}  \\[0.5ex]
{\footnotesize  $(18,15,7)$} & {\footnotesize  $C\in (8;3^2)$ on Del-Pezzo in $\PP^7$}& {\footnotesize  $C_\h{E}\in\Sigma_{|(5,5)|,1}$
}&{\footnotesize {\bf No}} \\
{\footnotesize } & {\footnotesize or }& {\footnotesize } \\
{\footnotesize} & {\footnotesize $C\cong C_\h{E}\in |(4,6)|$ on $X\cong\PP^1\times\PP^1\subset\PP^3$}& {\footnotesize $C_\h{E}\in\Sigma_{|(4,6)|,0}$} &{\footnotesize}  \\ 
{\footnotesize } & {\footnotesize embedded into $\PP^7$ by  $|(1,3)|$ on $X$}& {\footnotesize} \\ [0.5ex]
{\footnotesize $(20,16,8)$} & {\footnotesize $C\cong C_\h{E}\in |(5,5)|$ on $X\cong\PP^1\times\PP^1\subset\PP^3$}& {\footnotesize $C_\h{E}\in\Sigma_{|(5,5)|,0}$} &{\footnotesize Yes} \\ 
{\footnotesize } & {\footnotesize embedded into $\PP^8$ by  $|(2,2)|$ on $X$}& {\footnotesize }& {\footnotesize} \\ [0.5ex]
{\footnotesize $(2r+4,r+8,r)$}& {\footnotesize $\HL{2r+4,r+8,r}=\emptyset$}  & {\footnotesize}  \\ 
{\footnotesize $r\ge 9$}& {\footnotesize}  & {\footnotesize}  \\
\hline 
\end{tabular}
\label{3r8} 
\end{table}

\section{Irreducibility and existence of $\HL{g+r-4,r+9,r}$ and beyond}

We have seen another peculiar Hilbert scheme of linearly normal curves $\HL{g+r-4,r+9,r}$ which is non-empty only for $3\le r\le 11$; Theorem \ref{main} (d).
The following table indeed provides the existence of the corresponding Hilbert scheme which has been postponed.  We stress that the curves described in the following table do not necessarily represent a general element in a component. As in the previous table, $C_\h{E}$ denotes possibly singular curve in $\PP^3$ determined by the residual series $\h{E}$ of the complete and very ample hyperplane series of $[C]\in \h{H}^\h{L}_{g+r-4,g,r}$ and $\Delta_{\h{E}}$ is the base locus of $\h{E}$.
We also stress that the irreducibility of $\HL{g+r-4,r+9,r}$ has not been determined in a couple of cases. In a remark after the table, we provide some  brief explanations for those $\HL{g+r-4,r+9,r}$ which we know of their irreducibility (or reducibility).
\newpage
\begin{table}[ht]
\caption{{\footnotesize$\h{H}^\h{L}_{g+r-4,g,r}$=$\h{H}^\h{L}_{2r+5,r+9,r}\neq\emptyset$ for $3\le r\le 11$}} 
\centering 
\begin{tabular}{c c c c c c} 
\hline
{\footnotesize $(d,g,r)$} & {\footnotesize Description of $C\in \h{H}^\h{L}_{g+r-4,g,r}$}  & {\footnotesize $C_\h{E}$} & {\footnotesize Irreducibility}\\
\hline 
{\footnotesize $(11,12,3)$} & {\footnotesize $C\in(8;3^2,2^3,1)$ on $S_6\hookrightarrow\PP^3$ }  & {\footnotesize  $C_\h{E}\in\Sigma_{|(5,5)|,4}$} &{\footnotesize Yes}\\
{\footnotesize } & {\footnotesize which do {\bf not} form a component }& {\footnotesize and $\Delta_{\h{E}}\neq\emptyset$} \\
{\footnotesize } & {\footnotesize or }& {\footnotesize} \\
{\footnotesize } & {\footnotesize General element is directly linked to}& {\footnotesize } \\
{\footnotesize } & {\footnotesize $\widetilde{C}\in\h{H}_{5,0,3}$ in a c.i. of two quartics}& {\footnotesize } \\[0.5ex]
{\footnotesize $(13,13,4)$} & {\footnotesize $C\in(8;3^2,2^2,1)$ on $S_5\hookrightarrow\PP^4$ }  & {\footnotesize  $C_\h{E}\in\Sigma_{|(5,5)|,3}$}&{\footnotesize Don't know yet} \\
{\footnotesize } & {\footnotesize  }& {\footnotesize  and $\Delta_{\h{E}}\neq\emptyset$} \\
{\footnotesize } & {\footnotesize or }& {\footnotesize} \\
{\footnotesize } & {\footnotesize $C\in (9;4,3,2^6)$ on $S_8\rightarrow\PP^2$}& {\footnotesize $C_\h{E}\in\Sigma_{|(5,6)|,7}$} \\
{\footnotesize } & {\footnotesize embedded into $\PP^4$ by  $(4;2,1^7)$}& {\footnotesize  and $\Delta_{\h{E}}=\emptyset$} \\[0.5ex]
{\footnotesize $(15,14,5)$} & {\footnotesize $C\in(8;3^2,2,1)$ on $S_4\hookrightarrow\PP^5$ }  & {\footnotesize  $C_\h{E}\in\Sigma_{|(5,5)|,2}$} &{\footnotesize Don't know yet.}\\
{\footnotesize } & {\footnotesize  }& {\footnotesize and $\Delta_{\h{E}}\neq\emptyset$} &{\footnotesize}\\
{\footnotesize } & {\footnotesize or }& {\footnotesize} \\
{\footnotesize } & {\footnotesize $C\in (9;4,3,2^5)$ on $S_7\rightarrow\PP^2$}& {\footnotesize $C_\h{E}\in\Sigma_{|(5,6)|,6}$} \\
{\footnotesize } & {\footnotesize embedded into $\PP^5$ by  $(4;2,1^6)$}& {\footnotesize and $\Delta_{\h{E}}=\emptyset$} \\ [0.5ex]
{\footnotesize $(17,15,6)$} & {\footnotesize $C\cong C_\h{E}\in(10;4,3^5)$ on $S_6\hookrightarrow\PP^3$ } & {\footnotesize $C_\h{E}$ is on cubic.} &{\footnotesize No} \\ 
{\footnotesize } & {\footnotesize embedded into $\PP^6$ by  $(4;2,1^5)$}& {\footnotesize and $\Delta_{\h{E}}=\emptyset$} \\
{\footnotesize  } & {\footnotesize or }& {\footnotesize } \\
{\footnotesize } & {\footnotesize  $C\in (9;4,3,2^4)$ on $S_6\rightarrow\PP^2$}& {\footnotesize $C_\h{E}\in\Sigma_{|(5,6)|,5}$} \\
{\footnotesize } & {\footnotesize embedded into $\PP^6$ by  $(4;2,1^5)$}& {\footnotesize and $\Delta_{\h{E}}=\emptyset$} \\[0.5ex]
{\footnotesize $(19,16,7)$} & {\footnotesize $C\in (9;4^2)$ on $S_2\hookrightarrow\PP^7$ } & {\footnotesize$C_\h{E}\in\Sigma_{|(5,5)|,0}$}  &{\footnotesize Don't know yet}\\
{\footnotesize } & {\footnotesize or }& {\footnotesize }&{\footnotesize } \\
{\footnotesize } & {\footnotesize $C\in (9;4,3,2^3)$ on $S_5\rightarrow\PP^2$}& {\footnotesize $C_\h{E}\in\Sigma_{|(5,6)|,4}$} \\
{\footnotesize } & {\footnotesize embedded into $\PP^7$ by $(4;2,1^4)$}& {\footnotesize and $\Delta_{\h{E}}=\emptyset$ } \\ [0.5ex]
 
{\footnotesize $(21,17,8)$} & {\footnotesize  $C\in (9;4,3,2^2)$ on $S_4\rightarrow\PP^2$}& {\footnotesize $C_\h{E}\in\Sigma_{|(5,6)|,3}$}&{\footnotesize Yes} \\
{\footnotesize } & {\footnotesize  embedded into $\PP^8$ by $(4;2,1^3)$} & {\footnotesize and $\Delta_{\h{E}}=\emptyset$}  \\[0.5ex]
{\footnotesize $(23,18,9)$} & {\footnotesize $C\cong C_\h{E}\in |(4,7)|$ on $X\cong\PP^1\times\PP^1\subset\PP^3$}& {\footnotesize $C_\h{E}\in\Sigma_{|(4,7)|,0}$} &{\footnotesize No}\\ 
{\footnotesize } & {\footnotesize embedded into $\PP^9$ by  $|(1,4)|$ on $X$}& {\footnotesize and $\Delta_{\h{E}}=\emptyset$} \\
{\footnotesize } & {\footnotesize or }& {\footnotesize } \\

{\footnotesize } & {\footnotesize  $C\in (9;4,3,2)$ on $S_3\rightarrow\PP^2$}& {\footnotesize  $C_\h{E}\in\Sigma_{|(5,6)|,2}$
} \\
{\footnotesize } & {\footnotesize embedded into $\PP^9$ by $(4;2,1^2)$}& {\footnotesize and $\Delta_{\h{E}}=\emptyset$}&{\footnotesize} \\ [0.5ex]
{\footnotesize $(25,19,10)$} & {\footnotesize $C\in (9;4,3)$ on $S_2\rightarrow\PP^2$}  & {\footnotesize $C_\h{E}\in\Sigma_{|(5,6)|,1}$}
&{\footnotesize Yes}  \\
{\footnotesize } & {\footnotesize embedded into $\PP^{10}$ by  $(4;2,1)$}  & {\footnotesize and $\Delta_{\h{E}}=\emptyset$}  \\[0.5ex]
{\footnotesize $(27,20,11)$}& {\footnotesize $C\cong C_\h{E}\in|(5,6)|$ on  $X\cong\PP^1\times\PP^1\subset\PP^3$}  & {\footnotesize  $C_\h{E}\in\Sigma_{|(5,6)|,0}$}&{\footnotesize Yes}  \\ 
{\footnotesize}& {\footnotesize embedded into $\PP^{11}$ by $|(2,3)|$ on $X$}  & {\footnotesize and $\Delta_{\h{E}}=\emptyset$}  \\
\hline 
\end{tabular}
\label{3r11} 
\end{table}

\begin{rmk} 
\begin{enumerate} 
\item[(i)] $(d,g,r)=(11,12,3)$: The proof of Theorem \ref{linkage} using elementary linkage theory works verbatim. $\h{H}_{11,12,3}=\HL{11,12,3}$ is irreducible and has the expected dimension, its general element corresponds to a curve which is directly linked to a rational quintic. 
\item[(ii)] $(d,g,r)=(17,15,6)$:  By noting that $g=\pi_1(11,3)=15$ and analyzing $C_{\h{E}}\subset\PP^3$ in a way similar to  part (ii) of the proof of Theorem \ref{r=4}, we arrive at the two possibilities; (a) $C_\h{E}\in \Sigma_{|(5,6)|,5}$ or (b) $C_\h{E}\in (10; 4,3^5)$
on $S_6\subset\PP^3$. Two families form different components by semi-continuity; note that $h^0(\PP^3,\h{I}_{C_2}(2))>h^0(\PP^3,\h{I}_{C_3}(2))$, $C_2\in\Sigma_{|(5,6)|,5}$, $C_3\in(10; 4,3^5)$ whereas $$\dim\h{G}_{(10; 4,3^5)}=29<\dim\h{G}_{\Sigma_{|(5,6)|,5}}=30.$$
\item[(iii)] $(d,g,r)=(19,16,7)$: We note that $g=r+9\gneq \pi_1(11,3)$ if $r\ge 7$, hence $C_\h{E}\subset\PP^3$ lies on a quadric. Therefore we have  $C_\h{E}\in \Sigma_{|(5,6)|,4}\cup\Sigma_{|(4,7)|,2}$ if $\h{E}$ is base-point-free. One may also show that 
$\h{G}_{\Sigma_{|(4,7)|,2}}\nsubseteq\h{G}'$ by arguing that $\h{E}^\vee$ is not very ample for $C_\h{E}\in (4,7)$. 
However, if $\h{E}=|g^3_{10}+q|=|\h{E}'+q|$ has a base point $q$, $$|K_{C_\h{E}}-\h{E}|=|K_{C_{\h{E}'}}-\h{E}|=|K_{C_{\h{E}'}}-{\h{E}'}-q|,$$ which is still very ample for a general $q$.  The family of curves consisting of (smooth) curves in $|\h{O}_{\PP^1\times\PP^1}(5,5)|$  which is embedded into $\PP^8$  induced by  $|K_{C_{\h{E}'}}-{\h{E}'}|=(2,2)_{|{{C_\h{E}}={C_{\h{E}'}}}}$ and then projected from a general point on the image curve in $\PP^8$$$C_\h{E}=C_{\h{E}'}\stackrel{(2,2)}{\hookrightarrow }\PP^2\times\PP^2\subset\PP^8\stackrel{\pi_q}{\longrightarrow}\PP^7$$ has dimension $30\ge\lambda (d,g,r)=29$.
However, it is not totally clear if this family is in the boundary of the family of curves arising from $\h{G}_{\Sigma_{|(5,6)|,4}}^\vee$, which has dimension $31$.
\item[(iv)] $(d,g,r)=(21,17,8)$: We have $C_\h{E}\in\Sigma_{|(5,6)|,3}\cup\Sigma_{|(4,7)|,1}$. However it turns out that $\h{G}_{\Sigma_{|(4,7)|,1}}\nsubseteq\h{G}'$. Hence $\h{G}'=\h{G}_{\Sigma_{|(5,6)|,3}}$ is irreducible.
The irreducibility of 
$\HL{21,17,8}$ follows from the irreducibility of $\Sigma_{|(5,6)|,3}$.

\item[(v)] $(d,g,r)=(23,18,9)$: One can check $C_\h{E}\in\Sigma_{|(5,6)|,2}\cup\Sigma_{|(4,7)|,0}$ and $\h{G}'=\h{G}_{\Sigma_{|(5,6)|,2}}\cup\h{G}_{\Sigma_{|(4,7)|,0}}$. Hence $\HL{23, 18, 9}$ is reducible with two components.
\item[(vi)] $(d,g,r)=(25,19,10)$: $C_\h{E}\in\Sigma_{|(5,6)|,1}$ is the only possibility. i.e. $\h{G}'=\h{G}_{\Sigma_{|(5,6)|,1}}$ and hence 
$\HL{25,19,10}$ is irreducible.
\item[(vii)] $(d,g,r)=(27,20,11)$: $C_\h{E}\subset\PP^3$ is an extremal curve in the linear system $|\h{O}_{\PP^1\times\PP^1}(5,6)|$ on a smooth quadric and the irreducibility of 
$\HL{27,20,11}$ follows from the irreducibility of $\HL{11,20,3}$.
\end{enumerate}
\end{rmk}

\vni
The following statement regarding the Hilbert scheme of linearly normal curves with index of speciality $\alpha =5$ is rather crude compared with Theorem \ref{main}. We omit 
the proof which is more involved but is similar to the proof of Theorem \ref{main}.  We assume $r\ge 6$ in the following statement for the sake of brevity, i.e. in order to avoid  subtle complexities in lower dimensions $3\le r\le 5$, which need to be considered separately. A proof of the following proposition in a more general formulation shall appear in an article under preparation; \cite{index5}.

\begin{prop} \label{sub}
\begin{enumerate}
\item[(a)] $\HL{g+r-5,g,r}=\emptyset$ for $g\le r+8$, $r\ge 6$. 
\item[(b)] $\HL{g+r-5,g,r}\neq\emptyset$
for  $g=r+9$ or for any $g\ge r+13$, $r\ge 6$.
\item[(c)] $\HL{g+r-5,g,r}=\emptyset$ for $g=r+10$, $r\ge 9$.
\item[(d)] $\HL{g+r-5,g,r}=\emptyset$ for $g=r+11$, $r\ge 12$. 
\item[(e)] $\HL{g+r-5,g, r}\neq\emptyset$ for $g=r+12$, $r\ge 13$. 
\end{enumerate}
\end{prop}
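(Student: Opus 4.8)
The plan is to follow the proof of Theorem \ref{main} essentially line by line, replacing the three-dimensional residual series $g^3_e$ used there by the four-dimensional residual series $g^4_e$. Throughout I would work with $\h{E}=\h{D}^\vee=|K_C-\h{D}|$ for a general $\h{D}\in\h{G}\subset\widetilde{\h{G}}_\h{L}\subset\h{G}^r_{g+r-5}$; since $\deg\h{E}=2g-2-(g+r-5)=g-r+3=:e$ and $h^0(\h{E})=i(\h{D})=5$, the residual series is $\h{E}=g^4_e$ and maps $C$ into $\PP^4$. For part (a) I would simply invoke the Castelnuovo bound in $\PP^r$: for $r\ge 6$ and $g\le r+8$ one has $m:=\lfloor\frac{g+r-6}{r-1}\rfloor\le 2$, and $\pi(g+r-5,r)=g-5$ if $m=1$ while $\pi(g+r-5,r)=2g-r-9$ if $m=2$; both are $<g$ exactly when $g\le r+8$, so no such curve exists and $\HL{g+r-5,g,r}=\emptyset$.

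For part (b) I would separate the extremal case $g=r+9$ from the range $g\ge r+13$. When $g=r+9$ one checks $\pi(2r+4,r)=r+9=g$, so a general member is an extremal curve and existence follows from the classification of extremal curves as in \cite[Corollary 3.12]{H1}, exactly as for $g=r+7$ in Theorem \ref{main}(b). For $g\ge r+13$ I would build the curves on a general $k$-gonal curve $C$ using Lemma \ref{kveryample} with $n=4$ in place of $n=3$: writing $e=g-r+3=4k+j$ with $k\ge 4$ and $0\le j\le 3$, I set $\h{E}=|4g^1_k+B|$ for an effective divisor $B$ of degree $j$ whose points lie in distinct fibres of the $g^1_k$, and let $\h{D}=\h{E}^\vee$. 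By the residuation identity, $\h{D}$ is very ample if and only if $\dim|\h{E}+p+q|=\dim\h{E}=4$ for every $p+q\in C_2$; by Lemma \ref{kveryample} this holds once no fibre of the $g^1_k$ lies in $B+p+q$, which is guaranteed by $k\ge 4$ together with the general position of $B$, since $\deg(B+p+q)\le 5$ and the points of $B$ sit in distinct fibres. The numerical condition (\ref{veryamplek}), here $g\ge 2(j+2)+4(k-1)$, reduces to $r-3\ge j$ and so holds for all $r\ge 6$.

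For parts (c) and (d) I would first force $\h{E}$ to be compounded and then kill very ampleness of $\h{D}$ by an analogue of Lemma \ref{easylemma}. Since $\pi(13,4)=18$ and $\pi(14,4)=22$, for $g=r+10$ with $r\ge 9$ and for $g=r+11$ with $r\ge 12$ a birationally very ample $\h{E}$ would have image a nondegenerate degree-$e$ curve in $\PP^4$ of arithmetic genus $\ge g>\pi(e,4)$, which is impossible; hence $\h{E}$ is compounded, inducing a degree-$k$ cover $\phi\colon C\to C'$ onto a nondegenerate $C'\subset\PP^4$ of degree $f$ with $kf\le e$, $k\ge 2$, $f\ge 4$. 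The finitely many possibilities all have $f\le 7$, so $C'$ has genus $g'\le\pi(f,4)\le 3$ and the induced $\h{F}=g^4_f$ on $C'$ is nonspecial. If $k=2$, then for any $u\in C'$ one has $\h{E}+\phi^*(u)=\phi^*(\h{F}+u)+\Delta$ and $\dim|\h{F}+u|=\dim\h{F}+1$, whence $\dim|\h{E}+\phi^*(u)|\ge\dim\h{E}+1$ with $\phi^*(u)\in C_2$; if $k=3$ then necessarily $f=4$ and $\h{E}=|4g^1_3+\Delta|$ with $\Delta\neq\emptyset$, and taking the two residual points $p+q$ of the trigonal fibre through a base point of $\Delta$ gives $\h{E}+p+q=5g^1_3+(\Delta-\mathrm{pt})$ of dimension $\ge 5$, exactly as in Lemma \ref{easylemma}(a). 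In either case $\h{D}=\h{E}^\vee$ is not very ample, a contradiction, so $\HL{g+r-5,g,r}=\emptyset$.

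The genuinely hard case is part (e), which I expect to be the main obstacle. Here $e=15=4\cdot 3+3$ forces $k=3$ in the gonal construction, and on a trigonal curve $\dim|4g^1_3+B+p+q|$ jumps to $5$ whenever $B+p+q$ contains a trigonal fibre---and this is unavoidable for some $p+q$---so $\h{E}^\vee$ is never very ample for such $\h{E}$; the $k$-gonal method of part (b) therefore breaks down. Instead I would construct $C$ directly as a smooth curve of degree $15$ and genus $g=r+12$ in $\PP^4$ lying on a surface of small degree (a rational normal scroll or one of its degenerations), chosen so that its hyperplane series is a birationally very ample $\h{E}=g^4_{15}$, and then verify that the residual series $\h{E}^\vee=g^r_{2r+7}$ is very ample by an intersection-theoretic computation on the surface of exactly the kind carried out in Sections 3 and 4. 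The delicate points are the existence of a smooth such curve with the prescribed genus and, above all, the very ampleness of the residual; the Castelnuovo ceiling $\pi(15,4)=26$ controls which genera (and hence which $r$) admit a birationally very ample $g^4_{15}$, and is precisely what makes this case more involved than Theorem \ref{main}, as the author notes.
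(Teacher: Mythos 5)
Your parts (a)--(d) are sound and follow exactly the route the paper indicates for its (omitted) proof, namely the analogue of Theorem \ref{main} with the residual series $g^4_e$ in place of $g^3_e$: the Castelnuovo computations giving the thresholds $g\le r+8$ in (a), $r\ge 9$ for $g=r+10$ and $r\ge 12$ for $g=r+11$ all check out; the application of Lemma \ref{kveryample} with $n=4$ is correct (the numerical condition does reduce to $r-3\ge j$, and since $B$ is reduced with points in distinct fibres a fibre of the $g^1_k$ meets $B+p+q$ in at most $3<k$ points); and your extension of Lemma \ref{easylemma} to compounded $g^4_e$ with $e=13,14$ exhausts all pairs $(k,f)$ with $k\ge 2$, $f\ge 4$, $kf\le e$.

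The genuine gap is in part (e). For $g=r+12$ with $r\ge 15$ one has $g\ge 27>\pi(15,4)=26$, so a birationally very ample $g^4_{15}$ simply does not exist on such a curve: the residual series $\h{E}$ is \emph{forced} to be compounded. Your proposed construction --- a smooth curve of degree $15$ and genus $g$ in $\PP^4$ on a rational normal scroll with birationally very ample hyperplane series --- is therefore structurally incapable of producing any curve for $r\ge 15$, i.e.\ for all but the two values $r=13,14$; you record the ceiling $\pi(15,4)=26$ but do not draw this consequence. The intended construction (the only hint the paper gives, in the remark immediately following Proposition \ref{sub}) is precisely the one compounded type that your case analysis in (c)--(d) does \emph{not} rule out: $(k,f)=(3,5)$, a triple cover $C\stackrel{\phi}{\rightarrow}E$ of an elliptic curve with $\h{E}=\phi^*(g^4_5)$. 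For $k=3$ your trick of adding $\phi^*(u)$ adjoins three points rather than two, so it yields no obstruction to very ampleness; what must be proved instead is the positive assertion that $h^0(\phi^*(g^4_5)+p+q)=5$ for every $p+q\in C_2$ once $g=r+12\ge 25$. That verification is the real content of part (e) and is absent from your proposal.
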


\begin{rmk} \begin{enumerate}
\item[(i)] In three cases 
(1)  $6\le r\le 8$ and $g=r+10$; 
(2)  $8\le r\le 11$ and $g=r+11$;
(3) $g=r+12$ and $10\le r\le 14$,
the curve $C_\h{E}$ induced by  birationally very ample
$\h{E}=g^4_e=\h{D}^\vee$ for general $\h{D}\in\h{G}\subset\widetilde{\h{G}}_\h{L}$ lies on a rational normal scroll $S\subset\PP^4$; $\pi_1(e,4)\lneq g\le\pi (e,4)$.
Therefore, as we did in previous sections, we expect that the existence and the irreducibility (or reducibility in certain cases) would 
follow by looking at (possibly singular) curves on a scroll and studying the Severi variety of nodal curves on the scroll $S$. 
\item[(ii)] For $g=r+12$, $\h{E}=\h{D}^\vee =g^4_{15}$ may induce a triple covering of an elliptic curve $C\stackrel{\phi}{\rightarrow} E$, i,e, $\h{E}=\phi^*(g^4_5)$.  One may show that  $\h{E}^\vee$ is very ample if $r\ge 13$, which implies $\HL{g+r-5,g, r}\neq\emptyset$ for $g=r+12$, $r\ge 13$.
\end{enumerate}
\end{rmk}
\bibliographystyle{spmpsci} 

\end{document}